\newtheorem{thm}{Theorem}[section]
\newtheorem{corr}[thm]{Corollary}
\newtheorem{lem}[thm]{Lemma}
\newtheorem{prop}[thm]{Proposition}
\theoremstyle{definition}
\theoremstyle{remark}
\newtheorem{rem}{\bf Remark}[section]
\numberwithin{equation}{section}
\def\supp{{\rm{\,supp\,}}}
\begin{document}

\title{\small Sharp function and weighted $L^{p}$ estimates for pseudo-differential operators with symbols in general H\"{o}rmander classes}
\author{Guangqing Wang}
\address{ School of Mathematics and Statistics, Fuyang Normal University, Fuyang, Anhui 236041, P.R.China}

\email{wanggqmath@whu.edu.cn(G.Wang)}

\pagestyle{plain}

\maketitle

\begin{abstract}
 The purpose of this paper is to prove pointwise inequalities and to establish  the boundedness on weighted $L^{p}$ spaces for pseudo-differential operators $T_{a}$ defined by the symbol $a\in S^{m}_{\varrho,\delta}$ with $0\leq\varrho\leq1,$ $0\leq\delta<1$.

Firstly, we prove that if $m\leq-n(1-\varrho)/2$, then
$$(T_{a}u)^{\sharp}(x)\lesssim M(|u|^{2})^{1/2}(x)$$
for all $x\in\mathbb{R}^{n}$ and all Schwartz function $u$. Secondly, it is shown that if $1\leq r\leq2$ and $m\leq-\frac{n}{r}(1-\varrho)$, then for any $\omega$ belongs to the class of Muckenhoupt weights $A_{p/r}$ with $r<p<\infty$, these operators are bounded on $L^{p}_\omega$. Moreover, these results are sharp on the bound of $m$.
\end{abstract}

{\bf MSC (2010). } Primary 42B20, Secondary 42B37.

{{\bf Keywords}:  Pseudo-differential, Sharp function}

\section{Introduction and main results}

As we all know, the theory of pseudo-differential operators initiated by Kohn and Nirenberg \cite{Nirenberg} and H\"{o}rmander \cite{Hormander} have played an important role in the analysis of linear partial differential equations (PDEs). These operators can be written in the form
\begin{eqnarray}\label{df}
T_{a}u(x)
&=&\frac{1}{(2\pi)^n}\int_{\mathbb{R}^n} e^{ i \langle x,\xi\rangle}a(x,\xi)  \hat{u}(\xi)d\xi,
\end{eqnarray}
where the symbol $a$ is a amplitude function. The most widely used class of amplitudes are those, introduced by H\"{o}rmander in \cite{Hormander2}, named $S^{m}_{\varrho,\delta}$ class that consists of $a\in C^{\infty}(\mathbb{R}^{n}\times\mathbb{R}^{n})$ with
\begin{eqnarray*}
|\partial^{\beta}_x\partial^{\alpha}_{\xi}a(x,\xi)|\leq C_{\alpha,\beta}\langle\xi\rangle^{m-\varrho |\alpha|+\delta|\beta|},
\end{eqnarray*}
for $m\in\mathbb{R},$ $\varrho,\delta\in[0,1]$ and any~multi-indices $\alpha,\beta.$ In order to state the backgrounds of relevant research and our results we begin by introducing some notations used in this paper. For a locally integrable function $u$, the sharp function $u^{\sharp}$ and Hardy-Littlewood maximal function $Mu(x)$ are defined by the formula
$$u^{\sharp}(x)=\sup\limits_{x\in Q}\inf\limits_{c}\frac{1}{|Q|}\int_{Q}|u(y)-c|dy\quad {\rm and}\quad Mu(x)=\sup\limits_{x\in Q}\frac{1}{|Q|}\int_{Q}|u(y)|dy$$
where the supremum is taken over all cubes $Q$ containing $x$ and its sides parallel to the coordinate axes, and $c$ takes values all complex number. And for any $1\leq p<\infty$, the generalized  Hardy-Littlewood maximal function is defined by the function $M(|u|^{p})^{1/p}(x)$. See \cite {Fefferman1} for more details.

One of the most important topics about pseudo-differential operators is whether they are bounded on weighted Lebesgue space, which follows partly from the direct pointwise estimates for these operators. So, there are many researchers pay close attention to these estimates, such as Miller\cite{Miller}, Chanillo\cite{Chanillo}, Yabuta\cite{Yabuta}, Miyachi\cite{Miyachi} and so on. For more researches, we refer to \cite{Journe,Nishigaki,Tang,Hounie,Wang}. Here we would like to mention some results that are closely related to the problem we focus on.  For the symbol $a(x,\xi)\in S^{0}_{1,\delta}$ with $0\leq\delta<1$, Journ\'{e}\cite{Journe} proved that for each $1<p<\infty$
$$(T_{a}u)^{\sharp}(x)\lesssim M_{p}u(x)$$
for all $x\in\mathbb{R}^{n}$ and $u\in C^{\infty}_{0}(\mathbb{R}^{n}),$ which would be reduced to Miller's result \cite{Miller} when $\delta=0$. And for the symbol $a(x,\xi)\in S^{-n(1-\varrho)/2}_{\varrho,\delta}$ with $0\leq\delta<\varrho<1$, Chanillo and Torchinsky \cite{Chanillo} showed
$$(T_{a}u)^{\sharp}(x)\lesssim M_{2}u(x),$$
which was extended to the case $0<\delta=\varrho<1$ by Miyachi and Yabuta \cite{Miyachi}. As for the case $0<\varrho<\delta<1$, \'{A}lvarez and Hounie \cite{Hounie} got a similar result but put a stronger condition on the parameter $m$, that is,
$$m\leq-n(1-\varrho)-\mu,$$
where $$2\mu=1+n(\varrho+\max\{0,\frac{\delta-\varrho}{2}\})- \sqrt{\big(1+n(\varrho+\max\{0,\frac{\delta-\varrho}{2}\})\big)^{2}-4n\max\{0,\frac{\delta-\varrho}{2}\}},$$ which was relaxed to
$$m\leq\frac{n(2\varrho-\delta-1)}{2}$$
by Kim and Shin \cite{Kim} in 1991.

In this paper, the bound on $m$ is extended to
$m\leq-\frac{n}{2}(1-\varrho)$ in general case.

\begin{thm}\label{T1}
Let $0\leq\varrho\leq1,$ $0\leq\delta<1$ and $a(x,\xi)\in S^{-\frac{n}{2}(1-\varrho)}_{\varrho,\delta}$. Then there is a constant $C$ independent of $a$ and $u$, such that
\begin{equation}\label{sharp}
(T_{a}u)^{\sharp}\leq CM_{2}u(x).
\end{equation}
\end{thm}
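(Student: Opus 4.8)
The plan is to estimate, for a fixed cube $Q$ centered at $x_0$ with side length $\ell$, the quantity $\frac{1}{|Q|}\int_Q |T_a u(y) - c|\,dy$ for a well-chosen constant $c$, and bound it by $C M_2 u(x)$ uniformly in $Q$. First I would split $u = u_1 + u_2$ where $u_1 = u\chi_{2\sqrt n Q}$ is the "local" part and $u_2 = u\chi_{(2\sqrt n Q)^c}$ is the "far" part. For the local part, I would take $c=0$ and use Cauchy–Schwarz together with the $L^2 \to L^2$ boundedness of $T_a$ — which holds since $a\in S^{-n(1-\varrho)/2}_{\varrho,\delta} \subset S^0_{\varrho,\delta}$ for $\varrho \le 1$, $\delta<1$ (the Calderón–Vaillancourt theorem, or its refinements for these classes) — to get
\begin{equation*}
\frac{1}{|Q|}\int_Q |T_a u_1(y)|\,dy \le \left(\frac{1}{|Q|}\int_Q |T_a u_1|^2\right)^{1/2} \lesssim \left(\frac{1}{|Q|}\int_{2\sqrt n Q}|u|^2\right)^{1/2} \lesssim M_2 u(x).
\end{equation*}

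For the far part, I would take $c = T_a u_2(x_0)$ and estimate the oscillation $|T_a u_2(y) - T_a u_2(x_0)|$ for $y\in Q$. Writing $T_a u_2(y) = \int K(y,z) u_2(z)\,dz$ with $K(y,z)$ the kernel of $T_a$, the key is a kernel difference estimate: for $|y - x_0|$ comparable to a fraction of the distance $|z-x_0|$, one needs something like
\begin{equation*}
|K(y,z) - K(x_0,z)| \lesssim \frac{|y-x_0|^{\epsilon}}{|z-x_0|^{n+\epsilon}} \cdot (\text{correction involving }\varrho),
\end{equation*}
but because $\varrho$ may be less than $1$ the kernel is not a standard Calderón–Zygmund kernel; instead the classical approach (going back to Fefferman, Álvarez–Hounie) is to decompose $a$ via a Littlewood–Paley partition $a = \sum_j a_j$ with $a_j$ supported where $\langle\xi\rangle \sim 2^j$, so that $T_{a_j}$ has kernel $K_j$ satisfying the size and smoothness bounds
\begin{equation*}
|K_j(y,z)| \lesssim 2^{j(m+n)}(1 + 2^{j\varrho}|y-z|)^{-N}, \qquad |\nabla_y K_j(y,z)| \lesssim 2^{j(m+n+\delta)}(1+2^{j\varrho}|y-z|)^{-N}
\end{equation*}
for every $N$. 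Summing these estimates over $j$ — splitting the sum at the scale where $2^{j\varrho}|z-x_0|\sim 1$ — and then integrating against $|u_2(z)|$ over the annuli $2^k\ell \le |z-x_0| < 2^{k+1}\ell$, one controls $\frac{1}{|Q|}\int_Q |T_a u_2(y) - T_a u_2(x_0)|\,dy$ by $\sum_k 2^{-k\theta} M_2 u(x)$ for some $\theta>0$ (here Cauchy–Schwarz on each annulus produces the $M_2$ rather than $M_1$, and the decay in $k$ is exactly what the exponent $m = -n(1-\varrho)/2$ buys).

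The main obstacle is the far-part kernel analysis when $\varrho$ is small and $\delta$ is close to $1$, i.e. precisely the regime $0\le\varrho<\delta<1$ where earlier authors needed stronger hypotheses on $m$. The difficulty is that the gradient bound on $K_j$ costs a factor $2^{j\delta}$, and naively summing the smoothness estimates loses too much; the resolution is to interpolate between the size estimate (no $y$-regularity used) and the gradient estimate — equivalently, to use only $\epsilon$ derivatives of the kernel in $y$ for a small $\epsilon>0$ chosen depending on how the geometry of $Q$ relates to $2^{-j\varrho}$ — and to exploit the $L^2$-based averaging (Cauchy–Schwarz over dyadic annuli) rather than $L^1$ bounds, which is what makes the endpoint $m=-n(1-\varrho)/2$ attainable. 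I would organize this as a separate lemma on the pointwise behavior of $K_j$ and its $y$-differences, prove the annular $L^2$ estimates $\big(\int_{|z-x_0|\sim 2^k\ell}|K_j(y,z)-K_j(x_0,z)|^2\,dz\big)^{1/2}$, and then assemble the geometric series; the balance of the exponents $m+n$, $\varrho$, and $\delta$ in that series is the crux and is where the sharpness of the bound on $m$ becomes visible.
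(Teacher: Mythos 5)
There is a genuine gap, and it sits exactly where the theorem goes beyond the previously known cases. A preliminary issue first: your justification of the $L^{2}$ input is wrong in the regime $0\le\varrho<\delta<1$, because $S^{0}_{\varrho,\delta}$ is \emph{not} $L^{2}$-bounded there (Calder\'on--Vaillancourt requires $\delta\le\varrho$). What saves the local part is Hounie's theorem that $S^{\min(0,\frac{n}{2}(\varrho-\delta))}_{\varrho,\delta}$ is $L^{2}$-bounded, together with the inclusion $S^{-\frac{n}{2}(1-\varrho)}_{\varrho,\delta}\subset S^{\min(0,\frac{n}{2}(\varrho-\delta))}_{\varrho,\delta}$, valid precisely because $\delta<1$; this is the input the paper uses for its $l\ge1$ case. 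That part is repairable.

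The serious problem is the far part. Your scheme splits $u$ at the fixed scale of the cube and then sums, over $j$ and over dyadic annuli, estimates built from the size, the $\xi$-integration-by-parts decay, and the $y$-smoothness of $K_{j}$. This cannot close in the intermediate frequency range $l^{-1}\le 2^{j}\le l^{-1/\varrho}$, which is nonempty for every $\varrho<1$. There the smoothness gain $\min(1,2^{j}l)$ is already $1$, and the decay $(2^{j\varrho}R)^{\frac{n}{2}-N}$ only begins at annuli $R\ge 2^{-j\varrho}\ge l$; on the annuli $l\le R\le 2^{-j\varrho}$ the $L^{2}$-averaged size bound gives $(2^{j\varrho}R)^{\frac{n}{2}}M_{2}u(x_{0})$ per annulus, which sums over $R$ to $O(M_{2}u(x_{0}))$ for \emph{each} such $j$ with no decay in $j$; since there are about $\frac{1-\varrho}{\varrho}\log(1/l)$ such $j$, the triangle inequality over $j$ loses a factor $\log(1/l)$. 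Interpolating between the size and gradient bounds, or using $\epsilon$ derivatives in $y$, cannot repair this: both endpoints already give only $O(1)$ per $j$ in this range. The classical cure (Chanillo--Torchinsky) is a $j$-dependent splitting of $u$ at scale $2^{-j\varrho}$ whose local part is controlled by the sharp operator bound $\|T_{j}\|_{L^{2}\to L^{2}}\lesssim 2^{jm}$; but for $\varrho<\delta$ that bound degrades to $2^{j(m+\frac{n}{2}(\delta-\varrho))}$ and the resulting sum diverges like $l^{-\frac{n}{2}(\delta-\varrho)}$ --- this is exactly the obstruction that forced the stronger hypotheses on $m$ in \'Alvarez--Hounie and Kim--Shin. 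The paper's device for this range (Lemma \ref{La}) is absent from your plan: one subdivides $Q$ into subcubes of side $l^{\lambda}$, freezes the symbol at each subcube center $x_{i}$ so that $a(x_{i},\xi)\psi(2^{-j}\xi)\in S^{0}_{\varrho,0}$ has $L^{2}$ norm $\lesssim 2^{jm}$ with no $(\delta-\varrho)$-loss, pays a freezing error $l^{\lambda}2^{j\delta}$, and iterates over the scales $\lambda=\delta^{-k}$, $k=1,\dots,\gamma$, until $\delta^{-\gamma}>\varrho^{-1}$, after which the $j$-dependent splitting at scale $\Gamma$ takes over. Without this (or an equivalent mechanism) your argument only recovers the known range of $m$.
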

Let $\omega\in L^{1}_{loc}$ be a nonnegative function. One says that $\omega$ belongs to the class of Muckenhoupt $A_{p}$ weights for $1<p<\infty$ if there exists a constant $C>0$ such that
\begin{equation}\label{Ap}
\sup\limits_{Q\subset\mathbb{R}^{n}}\big(\frac{1}{|Q|}\int_{Q}\omega(x)dx\big)
\big(\frac{1}{|Q|}\int_{Q}\omega(x)^{\frac{1}{1-p}}dx\big)^{p-1}\leq C.
\end{equation}
One says that $\omega\in A_{1}$ if there exists a constant $C>0$ such that
\begin{equation}\label{A1}
M\omega(x)\leq C\omega(x)
\end{equation}
for almost all $x\in\mathbb{R}^{n}.$ The smallest constant appearing in (\ref{Ap}) or (\ref{A1}) is called the $A_{p}$ constant of $\omega$ which is denoted by $[\omega]_{p}$. The usual notation that
$$\|u\|_{L^{p}_{\omega}}=\big(\int_{\mathbb{R}^{n}}|u(x)|^{p}\omega(x)dx\big)^{\frac{1}{p}}$$
will be adopted in this paper. In 1991, Chen \cite{Chen} got the weighted boundedness of pseudo-differential operators in the case $0\leq\delta<\varrho<1$, which is extended to the case $0\leq\varrho\leq1,$ $0\leq\delta<1$ in this paper.
\begin{thm}\label{weight}
Let $0\leq\varrho\leq1,$ $0\leq\delta<1$, $1\leq r\leq2$ and $a(x,\xi)\in S^{-\frac{n}{r}(1-\varrho)}_{\varrho,\delta}$. Suppose $\omega\in A_{p/r}$ with $r<p<\infty$. Then there is a constant $C$ independent of $a$ and $u$, such that
$$\|T_{a}u\|_{L^{p}_{\omega}}\leq C\|u\|_{L^{p}_{\omega}}$$
\end{thm}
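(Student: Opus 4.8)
The plan is to derive Theorem \ref{weight} from the pointwise sharp function estimate of Theorem \ref{T1} together with the Fefferman--Stein inequality and the theory of $A_p$ weights, after first reducing from a general exponent $r$ to the endpoint case $r=2$. To handle the $r$-dependence I would revisit the proof of Theorem \ref{T1} (or state the analogous pointwise bound) to get $(T_a u)^{\sharp}(x)\lesssim M(|u|^r)^{1/r}(x)=M_r u(x)$ whenever $a\in S^{-\frac{n}{r}(1-\varrho)}_{\varrho,\delta}$ and $1\le r\le 2$; the case $r=2$ is exactly \eqref{sharp}, and the case $1\le r<2$ follows by the same argument with the Cauchy--Schwarz/Hölder step adjusted, or alternatively by a complex-interpolation/square-function argument between $r=1$ (an $L^1$-type bound, essentially a Calderón--Zygmund estimate since $S^{-n(1-\varrho)}_{\varrho,\delta}$ kernels satisfy the standard size and regularity conditions) and $r=2$.

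Granting the pointwise inequality $(T_a u)^{\sharp}(x)\lesssim M_r u(x)$, the next step is to invoke the Fefferman--Stein sharp maximal inequality: for $0<q<\infty$ and $\omega\in A_\infty$,
\begin{equation*}
\int_{\mathbb{R}^n}\big(M f(x)\big)^{q}\,\omega(x)\,dx\le C\int_{\mathbb{R}^n}\big(f^{\sharp}(x)\big)^{q}\,\omega(x)\,dx,
\end{equation*}
applied with $f=T_a u$ and $q=p$. Since $\omega\in A_{p/r}\subset A_\infty$, this is legitimate once one knows $T_a u\in L^{p}_\omega$ a priori; for that I would first run the argument for $u$ a Schwartz function (or $u\in C_0^\infty$), for which $T_a u$ and its relevant truncations are well-controlled, and then pass to general $u\in L^p_\omega$ by density, using that $C_0^\infty$ is dense in $L^p_\omega$ when $\omega\in A_p$ (here $\omega\in A_{p/r}\subset A_p$). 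This reduces matters to
\begin{equation*}
\|T_a u\|_{L^p_\omega}\lesssim \|M_r u\|_{L^p_\omega}=\big\||u|^r\big\|_{L^{p/r}_\omega}^{1/r}\cdot(\text{const}).
\end{equation*}

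The final ingredient is the boundedness of the Hardy--Littlewood maximal operator on $L^{s}_\omega$ for $\omega\in A_s$ with $1<s<\infty$: since $r<p<\infty$ we have $p/r>1$ and $\omega\in A_{p/r}$, so $M$ is bounded on $L^{p/r}_\omega$, hence
\begin{equation*}
\big\|M(|u|^r)\big\|_{L^{p/r}_\omega}\lesssim \big\||u|^r\big\|_{L^{p/r}_\omega}=\|u\|_{L^p_\omega}^r,
\end{equation*}
which gives $\|M_r u\|_{L^p_\omega}\lesssim \|u\|_{L^p_\omega}$ and closes the estimate. The main obstacle I anticipate is not any one of these classical facts but the \emph{a priori qualitative membership} $T_a u\in L^p_\omega$ needed to legitimately apply Fefferman--Stein: one must argue that for nice $u$ the operator $T_a$ (with $a\in S^{-\frac{n}{r}(1-\varrho)}_{\varrho,\delta}$, a symbol of negative order so that $T_a$ is, e.g., bounded on unweighted $L^2$ and has a kernel with acceptable decay) produces a function lying in $L^p_\omega$, perhaps by a preliminary crude estimate or by working with a family of uniformly bounded truncations $T_{a_\varepsilon}$ and taking limits. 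A secondary technical point is justifying the $1\le r<2$ version of the pointwise bound uniformly in $a$, which is why I would prefer to prove that pointwise inequality directly rather than interpolate operator norms.
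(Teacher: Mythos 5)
Your outline for the case $r=2$ (Theorem \ref{T1}, then the Fefferman--Stein sharp maximal inequality, then the boundedness of $M$ on $L^{p/2}_\omega$ for $\omega\in A_{p/2}$) is exactly what the paper does in Proposition \ref{P1}. The gap is in your treatment of $1\le r<2$: your entire reduction rests on the claimed pointwise inequality $(T_a u)^\sharp(x)\lesssim M_r u(x)$ for $a\in S^{-\frac{n}{r}(1-\varrho)}_{\varrho,\delta}$, and this inequality is \emph{false} at the endpoint $r=1$, which the theorem includes. Already for $\varrho=1$, $r=1$ the class is $S^0_{1,0}$ and contains the Hilbert transform ($n=1$, $a(\xi)=-i\sign\xi$); taking $u=\chi_{[0,1]}$ one has $Mu(x)\sim 1/x$ for large $x$, while the mean oscillation of $Hu(t)=\pi^{-1}\log|t/(t-1)|$ over the interval $[0,2x]$ is of size $(\log x)/x$, so $(Hu)^\sharp\le CMu$ cannot hold. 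This is precisely why Miller and Journ\'e only obtain $M_pu$ with $p>1$ on the right-hand side for $S^0_{1,\delta}$, and why the paper never states a pointwise sharp bound for $r<2$. Neither of your suggested fixes repairs this: ``adjusting the H\"older step'' in the proof of Theorem \ref{T1} would replace the Plancherel/$L^2$ kernel estimates (Lemmas \ref{L3}, \ref{La} and Hounie's $L^2$ theorem) by $L^{r'}$ estimates that degenerate as $r\to 1$, and one cannot interpolate pointwise sharp-function inequalities between $r=1$ and $r=2$ when the $r=1$ endpoint statement is false.

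The paper's actual route for $r<2$ works at the level of operator norms rather than pointwise bounds. For $r=1$ (Proposition \ref{P2}) it decomposes $T_a=\sum_j T_j$ dyadically, rescales each piece, and obtains geometric decay in $j$ for $\|T_j\|_{L^2_\omega\to L^2_\omega}$ by Stein--Weiss interpolation with change of measures between the unweighted $L^2$ bound (which decays in $j$) and a uniform $L^2_{\omega^{1+\epsilon}}$ bound supplied by the reverse H\"older inequality; summing in $j$ and then applying Rubio de Francia extrapolation yields all $1<p<\infty$ and all $\omega\in A_p$. For $1<r<2$ it interpolates the weighted bounds for each $T_j$ between the $r=1$ and $r=2$ cases (Marcinkiewicz) and injects summable decay through the unweighted $L^p$ estimates of Proposition \ref{Tz}, again via Stein--Weiss. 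To salvage your plan you would have to either genuinely prove the pointwise bound $(T_au)^\sharp\lesssim M_ru$ for $1<r<2$ (a new estimate, not a cosmetic modification of Theorem \ref{T1}) and still treat $r=1$ by a different mechanism, or adopt an operator-norm interpolation scheme of the paper's type. The closing steps of your argument (Fefferman--Stein with the a priori membership issue, and $\|M_r u\|_{L^p_\omega}\lesssim\|u\|_{L^p_\omega}$ for $\omega\in A_{p/r}$) are fine, but they cannot compensate for the missing pointwise input.
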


\begin{rem}
For $0<\delta<\varrho\leq1$ Chanillo and Torchinsky  \cite{Chanillo} raise a question: where 2 in \eqref{sharp} is the smallest index that may be used in the right hand side of the inequality. Chen \cite{Chen} gave a positive answer by the following result, which implies both Theorem \ref{T1} and Theorem \ref{weight} is sharp as well.
\end{rem}

\begin{thm}[Chen \cite{Chen}]\label{Chen}
Let $0\leq\varrho<1,$ $1\leq r\leq2$ and $1<p_{0}<\infty$. If $\forall a(\xi)\in S^{-\frac{n}{r}(1-\varrho)}_{\varrho,0} $ and $\forall \omega\in A_{q}$,
$$\|T_{a}u\|_{L^{p_{0}}_{\omega}}\leq C\|u\|_{L^{p_{0}}_{\omega}}.$$
Then $$q\leq \frac{p_{0}}{r}$$
\end{thm}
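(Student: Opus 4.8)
The plan is to argue by contraposition. Assume $q>p_0/r$; I will produce a single symbol $a(\xi)\in S^{-\frac nr(1-\varrho)}_{\varrho,0}$ and a single weight $\omega\in A_q$ for which $T_a$ fails to be bounded on $L^{p_0}_\omega$. Since $|x|^\alpha\in A_q$ precisely when $-n<\alpha<n(q-1)$ and since $q>p_0/r$ leaves room to pick $\alpha$ with $n(\tfrac{p_0}r-1)<\alpha<n(q-1)$, it is enough to find such an $a$ and such an $\alpha$ together with a family $u_\lambda$ along which $\|T_a u_\lambda\|_{L^{p_0}(|x|^\alpha dx)}/\|u_\lambda\|_{L^{p_0}(|x|^\alpha dx)}\to\infty$; the inequality actually being exploited is $\tfrac{n+\alpha}{p_0}>\tfrac nr$. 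Note that by Theorem \ref{weight} any such $a$ is (for $p_0>r$) automatically bounded on $L^{p_0}_\omega$ for every $\omega\in A_{p_0/r}$, which is precisely why the power $\alpha$ must be pushed past the threshold $n(\tfrac{p_0}r-1)$.

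For $a$ I take an oscillating Fourier multiplier in the class. When $0<\varrho<1$ the guiding example is the radial symbol $a(\xi)=(1-\chi(\xi))|\xi|^{-\frac nr(1-\varrho)}e^{i|\xi|^{1-\varrho}}$, with $\chi\in C_0^\infty$ equal to $1$ near the origin; one checks $a\in S^{-\frac nr(1-\varrho)}_{\varrho,0}$ because each $\xi$-derivative of $e^{i|\xi|^{1-\varrho}}$ costs only a factor $O(|\xi|^{-\varrho})$. To reach the \emph{sharp} threshold one refines this into a lacunary/cellular multiplier: decompose the annuli $|\xi|\sim 2^j$ into cubes of sidelength $\sim 2^{j\varrho}$, localise $|\xi|^{-\frac nr(1-\varrho)}$ to each cube by a bump $\varphi_{j,\nu}$ and attach signs $\varepsilon_{j,\nu}\in\{\pm1\}$; finite overlap of the cubes together with the Leibniz rule shows that the resulting $a$ lies in $S^{-\frac nr(1-\varrho)}_{\varrho,0}$ uniformly in the signs. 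The case $\varrho=0$ is handled separately, either by the same scheme with $e^{i|\xi|}$ in place of $e^{i|\xi|^{1-\varrho}}$, whose kernel concentrates near the sphere $|x|=1$ rather than near the origin, or by invoking the classical sharp weighted theory of oscillating Fourier multipliers.

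The analytic core has two steps. First, estimate the convolution kernel $K=\widehat a$: for $0<|x|\lesssim 1$ the phase $x\cdot\xi+|\xi|^{1-\varrho}$ has a single nondegenerate critical point at $|\xi_c|\sim|x|^{-1/\varrho}$, so stationary phase gives $K(x)\sim c\,|x|^{-\sigma}e^{ic_0|x|^{-(1-\varrho)/\varrho}}$ with an explicit $\sigma$, while for $|x|\gtrsim 1$ the phase is nonstationary on $\supp a$ and $K$ decays rapidly; for the cellular variant one obtains the matching two-sided bound by a direct Fourier computation together with Khinchine's inequality, which lets one choose the signs so that $|K|$ is as large, on as large a set, as the symbol class permits. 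Second, test the putative estimate against functions $u_\lambda$ adapted to $K$ — modulated bumps at the critical scale, or chirps $|x|^{-\beta}e^{\mp ic_0|x|^{-(1-\varrho)/\varrho}}$ built to undo the oscillation of $K$ — compute $\|u_\lambda\|_{L^{p_0}(|x|^\alpha dx)}$ and $\|T_a u_\lambda\|_{L^{p_0}(|x|^\alpha dx)}$ by splitting into dyadic shells, and verify that the two exponents in $\lambda$ force the quotient to diverge exactly because $\alpha>n(\tfrac{p_0}r-1)$.

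I expect the main obstacle to be this last step together with the optimal choice of $a$ and $u_\lambda$: the purely radial oscillating multiplier and the obvious test functions only rule out $q$ above a threshold strictly larger than $p_0/r$, so the sharp value is reached only through the cellular construction, which forces the randomization and a careful shell-by-shell accounting of how the power weight $|x|^\alpha$ interacts with the dyadic scales $2^{-j\varrho}$ at which the kernel pieces live. The endpoint $r=2$, where $K$ is only of critical size $|x|^{-n}$, is the most sensitive part and may require logarithmically refined test functions or weights rather than pure powers; the case $\varrho=0$ is the other delicate point, dispatched by the separate argument noted above.
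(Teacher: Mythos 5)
You should first note that the paper does not prove this statement at all: Theorem \ref{Chen} is quoted verbatim from Chen \cite{Chen} and used as a black box to justify the sharpness claims for Theorems \ref{T1} and \ref{weight}, so there is no in-paper argument to compare yours against. Judged on its own terms, your proposal has the right skeleton --- contraposition, the power weights $|x|^{\alpha}\in A_{q}$ with $n(\tfrac{p_{0}}{r}-1)<\alpha<n(q-1)$, and oscillating Fourier multipliers of the type $|\xi|^{-\frac{n}{r}(1-\varrho)}e^{i|\xi|^{1-\varrho}}$ analyzed by stationary phase --- and this is indeed the classical route (Chanillo--Torchinsky, Miyachi, Chen) to such sharpness statements.

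However, what you have written is a plan rather than a proof, and the gap sits exactly where you yourself locate it. For $r<2$ the stationary-phase computation gives the radial kernel size $|x|^{(\frac{n}{r}(1-\varrho)-n+\frac{n}{2}(1-\varrho))/\varrho}$ near the origin, which is strictly smaller than the critical size; you correctly observe that the radial symbol therefore only excludes $q$ above a threshold strictly larger than $p_{0}/r$, and you propose to close the gap with a cellular multiplier carrying random signs. But the randomization step as described does not obviously work in your favor: Khinchine's inequality yields square-root cancellation, i.e.\ $\bigl(\sum_{\nu}|\widehat{\varphi_{j,\nu}}(x)|^{2}\bigr)^{1/2}$, which for $r<2$ is \emph{smaller} pointwise than the constructive-interference (all signs $+1$) value at the caustic, so it is not established that the randomized kernel is ``as large as the symbol class permits'' on a set that the weight $|x|^{\alpha}$ actually sees. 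What the argument needs is a concrete lower bound for $\|T_{a}u_{\lambda}\|_{L^{p_{0}}(|x|^{\alpha}dx)}/\|u_{\lambda}\|_{L^{p_{0}}(|x|^{\alpha}dx)}$ with explicit exponents in $\lambda$ showing divergence precisely when $\alpha>n(\tfrac{p_{0}}{r}-1)$, and this computation --- together with the $\varrho=0$ case and the $r=2$ endpoint, both of which you only flag --- is deferred. Until the cellular construction is carried out and the shell-by-shell accounting is done, the proposal does not yet establish the theorem; you would either need to complete it or simply cite Chen's construction, as the paper does.
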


In the next section we will give some estimates for the kernel of pseudo-differential operator, and based on these estimates, the proof of Theorem \ref{T1} will be shown. In section 3, the proof of Theorem \ref{weight} will be shown.

\section{The proof of Theorem \ref{T1}}

First we introduce the standard Littlewood-Paley partition of unity. Let $C>1$ be a constant. Set
$E_{-1}=\{\xi:|\xi|\leq 2C\}$, $E_{j}=\{\xi:C^{-1}2^{j}\geq|\xi|\leq C2^{j+1}\}$, $j=0,1,2,\cdots$

\begin{lem}\label{L1}
There exist $\psi_{-1}(\xi),\psi(\xi)\in C^{\infty}_{0}$, such that
\begin{enumerate}
  \item $\supp\psi\subset E_{0}$, $\supp\psi_{-1}\subset E_{-1};$
  \item $0\leq\psi\leq1$, $0\leq\psi_{-1}\leq1;$
  \item $\psi_{-1}(\xi)+\sum\limits^{\infty}_{j=1}\psi(2^{-j}\xi)=1.$
\end{enumerate}
\end{lem}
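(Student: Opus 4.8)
The plan is to build both functions from a single smooth radial bump by dyadic dilation together with a telescoping identity.

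First I would fix a radial function $\theta\in C^{\infty}_{0}(\mathbb{R}^{n})$ which is radially non-increasing (that is, $\theta(\xi)=\theta_{0}(|\xi|)$ with $\theta_{0}$ non-increasing on $[0,\infty)$), satisfies $0\leq\theta\leq1$, equals $1$ on the ball $\{|\xi|\leq C^{-1}\}$, and is supported in $\{|\xi|<C\}$. Such a $\theta$ exists because $C^{-1}<C$ (since $C>1$); one may, for instance, mollify the indicator function of $\{|\xi|\leq 1\}$ and rescale. Then I would simply \emph{define}
$$\psi_{-1}(\xi):=\theta(\xi/2),\qquad \psi(\xi):=\theta(\xi/2)-\theta(\xi).$$

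Next I would verify (1) and (2). For the support of $\psi_{-1}$: $\theta(\xi/2)\neq0$ forces $|\xi|/2<C$, i.e. $|\xi|<2C$, so $\supp\psi_{-1}\subset E_{-1}$. For $\psi$: if $|\xi|\leq C^{-1}$ then $\theta(\xi/2)=\theta(\xi)=1$, so $\psi(\xi)=0$; if $|\xi|\geq 2C$ then $\theta(\xi/2)=\theta(\xi)=0$; hence $\supp\psi\subset\{C^{-1}\leq|\xi|\leq 2C\}=E_{0}$. The bound $0\leq\psi_{-1}\leq1$ is immediate, and $\psi(\xi)=\theta(\xi/2)-\theta(\xi)\geq0$ because $|\xi/2|\leq|\xi|$ and $\theta$ is radially non-increasing, while $\psi(\xi)\leq\theta(\xi/2)\leq1$.

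Finally, the partition identity (3) comes from telescoping. For a fixed $\xi$ we have $\psi(2^{-j}\xi)=\theta(2^{-j-1}\xi)-\theta(2^{-j}\xi)$, hence
$$\sum_{j=1}^{N}\psi(2^{-j}\xi)=\theta(2^{-N-1}\xi)-\theta(2^{-1}\xi).$$
Once $N$ is large enough that $2^{-N-1}|\xi|\leq C^{-1}$ we have $\theta(2^{-N-1}\xi)=1$; letting $N\to\infty$ yields $\sum_{j=1}^{\infty}\psi(2^{-j}\xi)=1-\theta(\xi/2)=1-\psi_{-1}(\xi)$, which is exactly (3). Note that for each $\xi$ only finitely many terms $\psi(2^{-j}\xi)$ are nonzero — precisely those $j$ with $C^{-1}2^{j}\leq|\xi|\leq C\,2^{j+1}$ — so the series is locally finite and the manipulation above has no convergence issue. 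There is essentially no serious obstacle here; the only points needing care are choosing the dilation factor ($2$) and the two radii of $\theta$ ($C^{-1}$ and $C$) consistently so that the rescaled supports land inside the prescribed shells $E_{j}$, and taking $\theta$ radially monotone to guarantee $\psi\geq0$.
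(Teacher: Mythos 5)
Your construction is correct and is the standard telescoping Littlewood--Paley construction ($\psi_{-1}=\theta(\cdot/2)$, $\psi=\theta(\cdot/2)-\theta$, with $\theta$ a radially non-increasing bump equal to $1$ on $\{|\xi|\leq C^{-1}\}$ and supported in $\{|\xi|<C\}$); all the support, positivity, and summation claims check out, including the local finiteness of the sum. The paper states this lemma without proof, treating it as standard, so there is nothing to compare against beyond noting that your argument is exactly the expected one.
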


%
By Lemma \ref{L1}, the operator $T_{a}$ can be decomposed as
\begin{eqnarray}\label{de}
T_{a}u(x)=\sum\limits_{j=0}^{\infty}T_{j}u(x)
\end{eqnarray}
where
\begin{eqnarray*}
T_{j}u(x)
=\int_{\mathbb{R}^n}K_{j}(x,x-y)u(y)dy
\end{eqnarray*}
with
$$K_{j}(x,w)=\int_{\mathbb{R}^n} e^{ i  \langle w,\xi\rangle}a(x,\xi)\psi(2^{-j}\xi)  d\xi.$$
Now we will make some necessary estimates for the operators $T_{j}$ and its kernel $K_{j}(x,y)$.

\begin{lem}\label{L3}
Let $Q(x_{0},l)$ be a fixed cube with side length $l<1$. Suppose $0\leq\varrho\leq1,$ $0\leq\delta<1$ and $a(x,\xi)\in S^{-\frac{n}{2}(1-\varrho)}_{\varrho,\delta}$. Then for any positive integer $j$ satisfying $2^{j}l<1$
$$\int_{\mathbb{R}^{n}}|u(y)||K_{j}(x,x-y)-K_{j}(z,z-y)|dy\leq C 2^{j}lM_{2}u(x_{0}),\quad \forall x,z\in Q(x_{0},l)$$
where $C$ is a constant independent of $a$ and $\varphi$.
\end{lem}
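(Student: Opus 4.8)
The goal is to control the difference of kernels $K_j(x,x-y)-K_j(z,z-y)$ when $x,z$ lie in a small cube $Q(x_0,l)$ with $2^j l<1$, integrated against $|u(y)|$, by $C2^j l\, M_2 u(x_0)$. The natural strategy is to split the $y$-integral into the local part $y\in Q^*:=Q(x_0,c l)$ (a fixed dilate of $Q$) and the far part $y\notin Q^*$, i.e. into annuli $|x_0-y|\sim 2^k l$ for $k\ge 1$. For the local part one does not even need the difference: using the pointwise size bound $|K_j(x,w)|\lesssim 2^{jn}$ (which follows by putting absolute values inside the defining integral of $K_j$ and using $|a(x,\xi)|\lesssim\langle\xi\rangle^{-n(1-\varrho)/2}\le\langle\xi\rangle^0$ on $\supp\psi(2^{-j}\cdot)$, together with $2^{jn}$ for the measure of the support), one gets $\int_{Q^*}|K_j(x,x-y)|\,|u(y)|\,dy\lesssim 2^{jn}\int_{Q^*}|u|\lesssim 2^{jn}l^n M u(x_0)\lesssim (2^j l)^n Mu(x_0)\le 2^j l\, M_2u(x_0)$ since $2^jl<1$ and $Mu\le M_2u$. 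The same bound applies with $z$ in place of $x$, so the local contribution is acceptable.

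For the far part I would use the mean value theorem in the first-slot/translation variable: $|K_j(x,x-y)-K_j(z,z-y)|\le|x-z|\sup_{\zeta\in Q}\bigl(|\nabla_w K_j(\zeta,\zeta-y)|+|(\nabla_1 K_j)(\zeta,\zeta-y)|\bigr)\le l\cdot(\text{kernel derivative bounds})$. The essential input is a decay estimate for $K_j$ and its relevant derivatives, of the form $|\partial_w^\gamma K_j(x,w)|\lesssim_N 2^{j(n+|\gamma|)}(2^j|w|)^{-N}$ (valid for large $2^j|w|$), obtained by integrating by parts $N$ times in $\xi$ in the oscillatory integral defining $K_j$, each integration by parts producing a factor $|w|^{-1}$ and, via $\partial_\xi$ hitting $a$ or $\psi(2^{-j}\xi)$, factors $\langle\xi\rangle^{-\varrho}\sim 2^{-j\varrho}$ or $2^{-j}$; one balances against the trivial bound to get $(1+2^j|w|)^{-N}$. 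With $|w|\sim 2^k l$ on the $k$th annulus (so $2^j|w|\sim 2^{j-k}\cdot 2^k l\cdot 2^{k}$... more precisely $2^j|w|\sim 2^{j+k}l$, which is $\ge 1$ once $k$ is large since actually $2^jl$ may be small — here I would instead note $2^j|x_0-y|\gtrsim 2^j l$ on $Q^*{}^c$, but to get genuine decay I sum over the annuli exploiting that the volume of the $k$th annulus is $\sim(2^k l)^n$). Summing $\sum_{k\ge1} l\cdot 2^{j(n+1)}(2^{j+k}l)^{-N}(2^k l)^n\cdot\bigl(\text{avg of }|u|\text{ over annulus}\bigr)$ and choosing $N>n$ gives a convergent geometric series in $k$ whose total is $\lesssim l\,2^{j(n+1)}(2^j l)^{n-N}\cdot Mu(x_0)\lesssim 2^j l\, Mu(x_0)$ using $2^jl<1$ and $n-N<-1$... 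I must be careful to land exactly on the factor $2^jl$, so I would track the powers so that one power of $l$ stays explicit and the remaining $(2^jl)^{\text{positive}}$ factors are bounded by $1$, and finally dominate $Mu(x_0)\le M_2u(x_0)$ by Jensen / Cauchy–Schwarz.

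The main obstacle is getting the kernel derivative estimates to come out with the precise gain $2^jl$ (one clean power of the diameter) rather than a weaker $C_N(2^jl)^\epsilon$ or an estimate that only works for $2^jl$ bounded below; this forces me to keep the non-stationary-phase integration by parts sharp and to choose the decay exponent $N$ large enough that the sum over annuli converges while still leaving an overall factor that is $\lesssim 2^jl$. A secondary technical point is that in the regime $\varrho<1$ the factor $\langle\xi\rangle^{m-\varrho|\alpha|}$ with $m=-n(1-\varrho)/2$ must be integrated over $\supp\psi(2^{-j}\cdot)$, producing $2^{j(n+m)}=2^{jn(1+\varrho)/2}$ rather than $2^{jn}$ in some estimates; one has to check this still combines with $2^jl<1$ to the good — here the use of $M_2$ (rather than $M$) on the right-hand side is exactly what absorbs the loss, via the bound $\frac{1}{|Q^*|}\int_{Q^*}|u|\le\bigl(\frac{1}{|Q^*|}\int_{Q^*}|u|^2\bigr)^{1/2}$ paired with the size $2^{jn/2}\cdot|Q^*|^{1/2}$ of $K_j$ in $L^2$. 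I would therefore, in the local-part estimate, prefer the $L^2$-based bound $\int_{Q^*}|K_j(x,x-y)u(y)|\,dy\le\|K_j(x,\cdot)\|_{L^2(Q^*-x)}\,\|u\|_{L^2(Q^*)}\lesssim 2^{jn/2}l^{n/2}\cdot l^{n/2}M_2u(x_0)=(2^jl)^{n/2}l^{n/2}M_2u(x_0)$, wait—this needs $2^jl<1$ to give $(2^jl)^{n/2}\le 2^jl$ only when $n/2\ge1$, i.e. $n\ge2$; for $n=1$ I would instead argue $(2^jl)^{1/2}\le 1$ isn't enough, so in fact one keeps $(2^jl)^{n/2}$ and notes $n/2\ge 1/2$ is not $\ge 1$, hence one should rather bound by $(2^jl)^{\min(1,n/2)}$ and observe the statement's $C2^jl$ should read with the understanding $2^jl\le(2^jl)^{n/2}$ fails for $n=1$; I will resolve this by using, for $n=1$, the $\gamma=1$ derivative estimate on $K_j$ to extract a genuine extra power of $2^jl$ from the oscillation, which is the uniform mechanism and avoids the dimensional split. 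Up to these bookkeeping choices, assembling the local bound and the summed-annuli bound yields the claimed inequality.
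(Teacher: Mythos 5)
Your decomposition is at the wrong spatial scale, and this is not a bookkeeping issue but the reason the argument cannot close for $\varrho<1$. You split the $y$-integral at $|y-x_0|\sim l$ and then rely on annuli $|y-x_0|\sim 2^k l$ together with a kernel decay estimate of the form $(1+2^j|w|)^{-N}$. But integrating by parts $N$ times in $\xi$ only gains $(2^{j\varrho}|w|)^{-N}$ (each $\partial_\xi$ landing on $a$ costs $\langle\xi\rangle^{-\varrho}\sim 2^{-j\varrho}$, not $2^{-j}$), so the kernel has \emph{no} usable decay in the whole intermediate range $l\lesssim |y-x_0|\lesssim 2^{-j\varrho}$, which is nonempty and large precisely when $\varrho<1$. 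In that range the best pointwise bound on the mean-value-theorem term is $l\cdot\sup|\nabla K_j|\lesssim l\,2^{j(1+m+n)}$, and pairing it with the average of $|u|$ over the ball of radius $2^{-j\varrho}$ yields $2^jl\cdot 2^{jn(1-\varrho)/2}Mu(x_0)$ --- off by an unbounded factor $2^{jn(1-\varrho)/2}$. The only way to remove it is Cauchy--Schwarz against $\|u\|_{L^2(B(x_0,2^{-j\varrho}))}\lesssim 2^{-j\varrho n/2}M_2u(x_0)$ combined with Plancherel, $\|\nabla_w K_j\|_{L^2}\lesssim 2^{j(1+m+n/2)}$; the exponents then cancel exactly because $m=-\tfrac n2(1-\varrho)$, and this is the one place where both the hypothesis on $m$ and the exponent $2$ in $M_2$ are actually used. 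This is what the paper does: it splits the difference into the $a(x,\xi)-a(z,\xi)$ part and the phase part $e^{i\langle x-z,\xi\rangle}-1$, and for each splits the $y$-integral at radius $2^{-j\varrho+1}$ (not $l$), using Cauchy--Schwarz plus Parseval on the inner ball and Cauchy--Schwarz with weight $|y-x_0|^{2N}$ plus Parseval after $N$ integrations by parts outside. Your proposal invokes the $L^2$/Plancherel mechanism only for the local part $|y-x_0|\lesssim l$, where it is not needed (your crude bound $(2^jl)^nMu(x_0)\le 2^jl\,Mu(x_0)$ already suffices there, and your $n=1$ worry is spurious since you dropped a factor $l^{n/2}$), and omits it in the region where the lemma actually lives.

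Even granting your (false) decay $(2^j|w|)^{-N}$, the summed far-part bound you display does not close: you end with a factor $(2^jl)^{n-N}$ with $n-N<0$ and $2^jl<1$, which is large, not small, so the claimed step ``$\lesssim 2^jl\,Mu(x_0)$ using $2^jl<1$ and $n-N<-1$'' has the inequality backwards. The correct version of the outer estimate proceeds from radius $2^{-j\varrho}$ outward (where $2^{j\varrho}|y-x_0|\gtrsim 1$ so the decay is genuine), and again via Cauchy--Schwarz so that $M_2u(x_0)$, not $Mu(x_0)$, appears. I would also note that the $I_2$-type term coming from the phase difference, which contributes the stated $2^jl$ (as opposed to $2^{j\delta}l$ from the symbol difference), needs to be isolated as in the paper; your mean-value-theorem formulation handles it implicitly, which is fine, but only once the scale of the decomposition is corrected.
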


\begin{proof}
The idea behind the proof is standard which could be found in \cite{Chanillo}. For convenience, we list the detail here. First, integrand on the left above can be bounded by
\begin{eqnarray}\label{E12}
&&\int_{\mathbb{R}^{n}}|u(y)||\int_{\mathbb{R}^n}e^{  i\langle x-y,\xi\rangle}\psi(2^{\nu}l\xi)\big(a(x,\xi)-a(z,\xi)\big) d\xi|dy\\
&&+\int_{\mathbb{R}^{n}}|u(y)||\int_{\mathbb{R}^n}e^{i\langle x-y,\xi\rangle}\psi(2^{\nu}l\xi)a(z,\xi)\big(e^{ i \langle x-z,\xi\rangle}-1\big) d\xi|dy
=:I_{1}+I_{2}.\nonumber
\end{eqnarray}

We consider $I_{1}$ firstly. Break up  this integrand as follows
\begin{eqnarray*}
&&\int_{|y-x_{0}|\leq2^{-j\varrho+1}}
+
\int_{|y-x_{0}|>2^{-j\varrho+1}}
=:I_{11}+I_{12}
\end{eqnarray*}

For the first part $I_{11}$, H\"{o}lder's inequality and Parseval's identity show that it is bounded by
\begin{eqnarray}\label{b2}
&&|x-z|\big(\int_{|y-x_{0}|\leq2^{-j\varrho+1}}|u(y)|dy\big)^{\frac{1}{2}}\nonumber\\
&&\quad\times\big(\int_{|y-x_{0}|\leq2^{-j\varrho+1}}|\int_{\mathbb{R}^n} e^{ i \langle x-y,\xi\rangle}\big(\nabla_{x}a\big) ( \tilde{x},\xi)\psi(2^{-j}\xi) d\xi|^{2}dy\big)^{\frac{1}{2}}\nonumber\\
&\lesssim&l2^{-\frac{jn\varrho}{2}}\big(\int_{\mathbb{R}^n} |\big(\nabla_{x}a\big) ( \tilde{x},\xi)\psi(2^{-j}\xi)|^{2} d\xi\big)^{\frac{1}{2}}M_{2}u(x_{0})\nonumber\\
&\lesssim&2^{j\delta}lM_{2}u(x_{0})
\end{eqnarray}

To estimate $I_{12}$, we use H\"{o}lder's inequality, integrating by parts, Parseval's identity and the fact  $|y-x_{0}|\sim|y-x|$ that follows from $2^{j}l<1$, $x\in Q(x_{0},l)$ and $|y-x_{0}|>2^{-j\varrho+1}$. They gives that $I_{12}$ is bounded by
\begin{eqnarray}\label{b3}
&&|x-z|\big(\int_{|y-x_{0}|>2^{-j\varrho+1}}\frac{|u(y)|}{|y-x_{0}|^{2N}}dy\big)^{\frac{1}{2}}\nonumber\\
&&\quad\times\big(\int_{|y-x_{0}|>2^{-j\varrho+1}}|y-x_{0}|^{2N}|\int_{\mathbb{R}^n} e^{ i \langle x-y,\xi\rangle}\big(\nabla_{x}a\big) ( \tilde{x},\xi)\psi(2^{-j}\xi) d\xi|^{2}dy\big)^{\frac{1}{2}}\nonumber\\
&\lesssim&l2^{-j\varrho(\frac{n}{2}-N)}\sum\limits_{|\alpha|=N}\big(\int_{\mathbb{R}^n} |\partial^{\alpha}_{\xi}\big(\nabla_{x}a\big) ( \tilde{x},\xi)\psi(2^{-j}\xi)|^{2} d\xi\big)^{\frac{1}{2}}M_{2}u(x_{0})\nonumber\\
&\lesssim&2^{j\delta}lM_{2}u(x_{0})
\end{eqnarray}
So, (\ref{b2}) and (\ref{b3}) imply that
$$I_{1}\lesssim 2^{j\delta}lM_{2}u(x_{0}).$$

With the same argument as above, we can get
\begin{eqnarray*}
I_{2}\lesssim 2^{j}lM_{2}u(x_{0}).
\end{eqnarray*}
Thus the desired estimate can be got immediately.
\end{proof}

\begin{lem}\label{La}
Let $Q(x_{0},l)$ be a fixed cube with side length $l<1$. Suppose $0<\varrho<\delta<1$, $a\in S^{-\frac{n}{2}(1-\varrho)}_{\varrho,\delta}$, then for any $1\leq\lambda\leq\frac{1}{\varrho}$ and any positive integer $N>\frac{n}{2}$
\begin{eqnarray*}
\frac{1}{|Q|}\int_{Q(x_{0},l)}|T_{j}u(x)|dx
&\lesssim&M_{2}u(x_{0})\big(l^{\lambda}2^{j\delta}+ 2^{-j\frac{n}{2}(1-\varrho)}l^{-\frac{\lambda n}{2}(1-\varrho)}+l^{\lambda\varrho(\frac{n}{2}-N)}2^{j\varrho(\frac{n}{2}-N)}\big).
\end{eqnarray*}
\end{lem}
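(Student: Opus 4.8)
The plan is to fix the cube $Q(x_0,l)$ with $l<1$ and the positive integer $j$, and to estimate the average of $|T_j u(x)|$ over $Q$ by bounding the kernel $K_j(x,w)$ pointwise and then integrating against $|u|$. Recall $K_j(x,w)=\int e^{i\langle w,\xi\rangle}a(x,\xi)\psi(2^{-j}\xi)\,d\xi$, where the integrand is supported on $|\xi|\sim 2^j$, a region of measure $\sim 2^{jn}$. On the region where $|w|\leq c\,2^{-j\varrho\lambda}$ (using the scale $l^{\lambda}$ built into the statement, so really $|w|\lesssim l^{\lambda}$ after a suitable comparison), I would use the trivial bound $|K_j(x,w)|\lesssim 2^{jn}\cdot 2^{-\frac{n}{2}(1-\varrho)j}$ coming directly from $a\in S^{-\frac{n}{2}(1-\varrho)}_{\varrho,\delta}$ and $|\psi(2^{-j}\xi)|\leq 1$. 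On the complementary region $|w|\gtrsim l^{\lambda}$, I would integrate by parts $N$ times in $\xi$, each integration by parts producing a factor $|w|^{-1}$ and a derivative $\partial_\xi$ hitting $a\psi(2^{-j}\cdot)$, which costs $2^{-j\varrho}$ from the symbol estimate (and $2^{-j}$ from the cutoff, which is better); this yields $|K_j(x,w)|\lesssim 2^{jn}2^{-\frac{n}{2}(1-\varrho)j}(2^{-j\varrho}|w|^{-1})^{N}$.

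With these two kernel bounds in hand, I would split $\int |K_j(x,x-y)||u(y)|\,dy$ according to whether $|y-x|$ is $\lesssim l^{\lambda}$ or $\gtrsim l^{\lambda}$ (replacing $x$ by $x_0$ up to the usual comparison since $x\in Q(x_0,l)$ and $l^\lambda \geq l$). Over the near region, Hölder's inequality gives a bound by $\big(\int_{|y-x_0|\lesssim l^\lambda}|u|^2\big)^{1/2}\cdot (l^\lambda)^{n/2}\cdot\|K_j(x,\cdot)\|_{L^\infty} \lesssim l^{\lambda n/2}M_2u(x_0)\cdot 2^{jn}2^{-\frac{n}{2}(1-\varrho)j}$; after reorganizing this produces the term $2^{-j\frac{n}{2}(1-\varrho)}l^{-\frac{\lambda n}{2}(1-\varrho)}M_2u(x_0)$, where the exponent $l^{-\lambda n(1-\varrho)/2}$ emerges precisely because $2^{jn}l^{\lambda n/2}=2^{jn(1-\varrho)/2}\cdot (2^{j\varrho}l^{\lambda})^{n/2}\cdot\ldots$ — here I would use $l^{\lambda}\geq$ something; actually the cleanest route is to carry through the bound and simplify. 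Over the far region, I decompose into dyadic annuli $2^k l^\lambda \leq |y-x_0| < 2^{k+1}l^\lambda$, $k\geq 0$, apply Hölder on each annulus using the decay bound with $N>n/2$ so that the resulting series $\sum_k 2^{-k(N-n/2)}$ converges, and obtain the term $l^{\lambda\varrho(n/2-N)}2^{j\varrho(n/2-N)}M_2u(x_0)$ (note $N>n/2$ makes this exponent negative, so it is a genuine decay factor when $2^j l^\lambda$ is large). The term $l^\lambda 2^{j\delta}$ I expect to come from the contribution where the derivative falls the "bad" way, i.e. using the $\partial_x$-type estimate already exploited in Lemma \ref{L3}: more precisely it is the ``main term'' one gets by integrating by parts but comparing $a(x,\xi)$ across $x\in Q$, which introduces a factor $l$ times $2^{j\delta}$ from $|\nabla_x a|\lesssim \langle\xi\rangle^{-\frac{n}{2}(1-\varrho)+\delta}$; I would actually more likely get this by reusing the mean-value/Taylor splitting $a(x,\xi)=a(x_0,\xi)+(x-x_0)\cdot\nabla_x a(\tilde x,\xi)$ exactly as in the proof of Lemma \ref{L3}, the first piece being $x$-independent and the second contributing $l^{\lambda}2^{j\delta}M_2u(x_0)$ after the same near/far split with $N>n/2$.

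The main obstacle I anticipate is bookkeeping the interplay between the three scales $2^{-j}$, $2^{-j\varrho}$, and $l^{\lambda}$, and in particular justifying the exact exponents $-\frac{\lambda n}{2}(1-\varrho)$ and $\lambda\varrho(\frac{n}{2}-N)$: one must be careful that the constraint $1\leq\lambda\leq 1/\varrho$ is exactly what is needed for the annular sums to converge uniformly in $j$ and for the trivial-bound region and integration-by-parts region to match up at $|w|\sim l^{\lambda}$ (this is where $\lambda\varrho\leq 1$ enters). The hypothesis $0<\varrho<\delta<1$ is used only to know the ``bad'' $x$-derivative costs $2^{j\delta}$ rather than $2^{j\varrho}$, so that the first term is $l^\lambda 2^{j\delta}$; the argument is otherwise insensitive to the ordering of $\varrho$ and $\delta$. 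No essentially new idea beyond Lemma \ref{L3} is required — it is the same Hölder + Parseval + integration-by-parts scheme, now applied to $T_j u$ itself rather than to a difference $T_j u(x)-T_j u(z)$, so there is no gain of a factor $|x-z|\lesssim l$ and instead one pays the price in the scale $l^\lambda$ at which one is forced to cut.
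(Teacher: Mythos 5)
Your proposal uses the right general toolkit (H\"older, Parseval, integration by parts at scale $2^{-j\varrho}$), but it misses the two structural ideas that actually produce the three terms of the lemma, and as written the estimates do not close.

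First, the near-region bound fails. You bound $|K_j(x,w)|\lesssim 2^{jn}2^{-j\frac{n}{2}(1-\varrho)}$ by the triangle inequality on the $\xi$-integral and then integrate $|u|$ over $|y-x_0|\lesssim l^{\lambda}$; H\"older gives $l^{\lambda n}\,2^{jn}2^{-j\frac{n}{2}(1-\varrho)}M_{2}u(x_{0})$, and the ratio of this to the target term $2^{-j\frac{n}{2}(1-\varrho)}l^{-\frac{\lambda n}{2}(1-\varrho)}M_{2}u(x_{0})$ is $\big(2^{j}l^{\lambda(3-\varrho)/2}\big)^{n}$, which tends to infinity as $j\to\infty$ for fixed $l$. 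The lemma is stated for all positive integers $j$ (and is applied with $2^{j}$ as large as $l^{-1/\delta^{k+1}}$), so the middle term cannot be reached by an $L^{1}\to L^{\infty}$ kernel bound: it encodes the $L^{2}$ operator norm $\lesssim 2^{-j\frac{n}{2}(1-\varrho)}$ of the coefficient-frozen operator with symbol $a(x_{i},\xi)\psi(2^{-j}\xi)\in S^{0}_{\varrho,0}$, applied via Plancherel to $u\chi_{Q(x_{i},2l^{\varrho\lambda})}$. Note also that the correct near/far cut is at radius $l^{\varrho\lambda}$, not $l^{\lambda}$; cutting at $l^{\lambda}$ gives the far-region bound $l^{\lambda(\frac{n}{2}-N)}2^{j\varrho(\frac{n}{2}-N)}$, which is strictly larger than the stated $l^{\lambda\varrho(\frac{n}{2}-N)}2^{j\varrho(\frac{n}{2}-N)}$ because $\frac{n}{2}-N<0$ and $l^{\lambda}\leq l^{\lambda\varrho}$.

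Second, the term $l^{\lambda}2^{j\delta}$ cannot be obtained by freezing the symbol at the single point $x_{0}$: the remainder $(x-x_{0})\cdot\nabla_{x}a(\tilde{x},\xi)$ only yields $|x-x_{0}|2^{j\delta}\lesssim l\,2^{j\delta}$, and $l\geq l^{\lambda}$ when $\lambda\geq1$ and $l<1$, so this is strictly weaker than claimed. The whole point of the parameter $\lambda$ — and the key idea of the paper's proof — is to cover $Q(x_{0},l)$ by roughly $l^{n(1-\lambda)}$ subcubes $Q(x_{i},l^{\lambda})$ and to freeze the symbol at each center $x_{i}$, so that the freezing error is $|x-x_{i}|2^{j\delta}\lesssim l^{\lambda}2^{j\delta}$ on each subcube; the subcube count $\lesssim l^{n(1-\lambda)}$ then cancels against the normalization $|Q|^{-1}$ and the subcube volumes so that all three exponents come out as stated. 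Without this covering, none of the three terms is obtained with the correct power of $l$.
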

\begin{proof}
If $1<\lambda\leq\frac{1}{\varrho}$. Then $l^{\lambda}<l$ since $l<1$. Take integer $L$ such that it is the first number no less than  $l^{1-\lambda}$, that is $L-1<l^{1-\lambda}\leq L$. Then there are $L^{n}$ cubes with the same side length $l^{\lambda}$ covering $Q(x_{0},l)$. Moreover, we have
$$Q(x_{0},l)\subset\cup_{i=1}^{L^{n}}Q(x_{i},l^{\lambda})\subset Q(x_{0},2l)$$
Clearly, the number of these cubes $L^{n}$ less than $2^{n}l^{n(1-\lambda)}$

Denote
\begin{eqnarray}\label{0A}
T_{j,i}u(x)
&=&\int_{\mathbb{R}^n} e^{ i \langle x,\xi\rangle}a(x_{i},\xi) \psi(2^{-j}\xi) \hat{u}(\xi)d\xi.
\end{eqnarray}
We write
\begin{eqnarray}\label{A}
&&\frac{1}{|Q|}\int_{Q(x_{0},l)}|T_{j}u(x)|dx\nonumber\\
&\leq&\frac{1}{|Q|}\sum\limits_{i=1}^{L^{n}}\bigg(\int_{Q(x_{i},l^{\lambda})}|T_{j}u(x)-T_{j,i}u(x)|dx+\int_{Q(x_{i},l^{\lambda})}|T_{j,i}u(x)|dx\bigg).
\end{eqnarray}
We claim that
\begin{eqnarray}\label{a}
|T_{j}u( x)-T_{j,i}u( x)|\lesssim M_{2}u(x_{0})|x-x_{i}|2^{j\delta}
\end{eqnarray}
\begin{eqnarray}\label{b}
\int_{Q(x_{0},l)}|T_{j,i}u(x)|dx
&\lesssim&M_{2}u(x_{0})\big(2^{jm}l^{\frac{\lambda n}{2}+\frac{\varrho\lambda n}{2}}+l^{n\lambda}l^{\lambda\varrho(\frac{n}{2}-N)}2^{j(m-\varrho N+\frac{n}{2})}\big)
\end{eqnarray}
Recall that $L^{n}\leq2^{n}l^{n(1-\lambda)}$. Substituting both of them into (\ref{A}), we can get the desired estimate.

First, we prove the estimate (\ref{a}). Note that $|T_{j}u( x)-T_{j,i}u( x)|$ can be bounded by
\begin{eqnarray*}
\int_{\mathbb{R}^n}|u(y)||\int_{\mathbb{R}^n} e^{ i \langle x-y,\xi\rangle}\big(a(x,\xi)-a(x_{i},\xi)\big)\psi(2^{-j}\xi) d\xi|dy
\end{eqnarray*}
Then, \eqref{a} follows from the same argument as \eqref{E12}.

Now, we prove (\ref{b}). For fixed $x_{i}$, we can see that $a(x_{i},\xi) \psi(2^{-j}\xi)\in S^{0}_{\varrho, 0}$ with the bounds $\lesssim 2^{-j\frac{n}{2}(1-\varrho)}$. So
$$\|T_{j,i}u\|_{L^{2}}\lesssim2^{-j\frac{n}{2}(1-\varrho)}\|u\|_{L^{2}}.$$
Set
\begin{eqnarray}\label{ui}
u_{i,1}(x)=u(x)\chi_{Q(x_{i},2l^{\varrho\lambda})}(x) \quad{\rm and} \quad u_{i,2}(x)=u(x)-u_{i,1}(x),
\end{eqnarray}
where $\chi_{Q(x_{i},2l^{\varrho\lambda})}(x)$ is the characteristic function of the ball $Q(x_{i},2l^{\varrho\lambda}).$ Then the left hand of (\ref{b}) can be bounded by
\begin{eqnarray*}
\int_{Q(x_{i},l^{\lambda})}|T_{j,i}u_{i,1}(x)|dx+
\int_{Q(x_{i},l^{\lambda})}|T_{j,i}u_{i,2}(x)|dx=:M_{1}+M_{2}
\end{eqnarray*}

Recall that $\lambda\leq\frac{1}{\varrho}$ and $Q(x_{i},l^{\lambda})\subset Q(x_{0},2l)$. We can get $x_{0}\in Q(x_{i},2l^{\lambda \varrho})$. H\"{o}lder's inequality and $L^{2}$-boundedness of $T_{j,i}$ imply that $M_{1}$ is bounded by
\begin{eqnarray}\label{b1}
l^{\frac{\lambda n}{2}}\|T_{j,i}u_{i,1}\|_{L^{2}}
\lesssim 2^{-j\frac{n}{2}(1-\varrho)}l^{\frac{\lambda n}{2}}\|u_{i,1}\|_{L^{2}}
\lesssim 2^{-j\frac{n}{2}(1-\varrho)}l^{\frac{\lambda n}{2}+\frac{\varrho\lambda n}{2}}M_{2}u(x_{0}).
\end{eqnarray}

For $M_{2}$, noticing that any $x\in Q(x_{i},l^{\lambda})$ and any $y\in Q^{C}(x_{i},2l^{\lambda\varrho})$, we have
$$|y-x|\geq\frac{|y-x_{i}|}{2}.$$
Moveover $|y-x_{0}|\sim|y-x_{i}|$ follows form  $x_{0}\in Q(x_{i},2l^{\lambda \varrho})$ and $y\in Q^{C}(x_{i},2l^{\lambda\varrho})$.
H\"{o}lder's inequality, Integrating by parts  and Parseval's identity give that $|T_{j,i}u_{i,2}(x)|$ is bounded by
\begin{eqnarray*}
&&\big(\int_{|y-x_{i}|>l^{\lambda\varrho}}\frac{|u(y)|}{|y-x_{i}|^{2N}}dy\big)^{\frac{1}{2}}
\big(\int_{|y-x_{i}|>l^{\lambda\varrho}}|y-x_{i}|^{2N}|\int_{\mathbb{R}^n} e^{ i \langle x-y,\xi\rangle}a(x_{i},\xi)\psi(2^{-j}\xi) d\xi|^{2}dy\big)^{\frac{1}{2}}\nonumber\\
&\lesssim&\big(\int_{|y-x_{0}|>l^{\lambda\varrho}}\frac{|u(y)|}{|y-x_{0}|^{2N}}dy\big)^{\frac{1}{2}}\big(\int_{\mathbb{R}^n} |\partial^{\alpha}_{\xi}a( x_{i},\xi)\psi(2^{-j}\xi)|^{2} d\xi\big)^{\frac{1}{2}}\nonumber\\
&\lesssim&l^{\lambda\varrho(\frac{n}{2}-N)}2^{j\varrho(\frac{n}{2}- N)}M_{2}u(x_{0})
\end{eqnarray*}
 So
\begin{eqnarray}\label{b20}
M_{2}=\int_{Q(x_{i},l^{\lambda})}|T_{j,i}u_{i,2}(x)|dx\lesssim l^{n\lambda}l^{\lambda\varrho(\frac{n}{2}-N)}2^{j\varrho(\frac{n}{2}- N)}M_{2}u(x_{0})
\end{eqnarray}
Thus, the desired estimate (\ref{b}) follows from (\ref{b1}) and (\ref{b20}).

If $\lambda=1$, we define
\begin{eqnarray}
T_{j,0}u(x)
&=&\int_{\mathbb{R}^n} e^{ i \langle x,\xi\rangle}a(x_{0},\xi) \psi(2^{-j}\xi) \hat{u}(\xi)d\xi.
\end{eqnarray}
Then the desired estimate can be got by the same argument as above with $T_{j,i}u$ replaced by $T_{j,0}u$.

So we complete the proof.
\end{proof}

We remark that a similar result holds for the case $\varrho=0$ .The only change in the argument that are needed are as follows: The definition of the function $u_{i,1}(x)$ given by \eqref{ui} must be modified to read
$$u_{i,1}(x)=u(x)\chi_{Q(x_{i},2l^{\frac{n\lambda}{2N}}2^{j\frac{n}{2N}})}(x),$$
where $1\leq\lambda\leq\frac{2}{1-\delta}$ and positive integer $N>\frac{n}{1-\delta }$.
\begin{lem}\label{La0}
Let $Q(x_{0},l)$ be a fixed cube with side length $l<1$. Suppose $\varrho= 0$, $0<\delta<1$, $a\in S^{-\frac{n}{2}}_{0,\delta}$, then for any $1\leq\lambda\leq\frac{2}{1-\delta}$ and any positive integer $N>\frac{n}{1-\delta }$
\begin{eqnarray*}
\frac{1}{|Q|}\int_{Q(x_{0},l)}|T_{j}u(x)|dx
&\lesssim&M_{2}u(x_{0})\big(l^{\lambda}2^{j\delta}+ 2^{-j\frac{n}{2}(1-\frac{n}{2N})}l^{-\frac{\lambda n}{2}(1-\frac{n}{2N})}\big)
\end{eqnarray*}
\end{lem}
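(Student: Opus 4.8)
The plan is to mimic the structure of the proof of Lemma \ref{La}, but with the splitting of $u$ adjusted to the case $\varrho=0$, as indicated in the remark preceding the statement. First I would keep the covering argument verbatim: for $1<\lambda\leq\frac{2}{1-\delta}$, cover $Q(x_{0},l)$ by $L^{n}\leq 2^{n}l^{n(1-\lambda)}$ cubes $Q(x_{i},l^{\lambda})$ contained in $Q(x_{0},2l)$, introduce the frozen operators $T_{j,i}$ as in \eqref{0A}, and decompose via \eqref{A}. The term $\frac{1}{|Q|}\sum_{i}\int_{Q(x_{i},l^{\lambda})}|T_{j}u-T_{j,i}u|\,dx$ is handled exactly as before, since \eqref{a} only uses the Hörmander estimate on $\partial_{x}a$ and yields the contribution $\lesssim l^{\lambda}2^{j\delta}M_{2}u(x_{0})$. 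So the real work is the analogue of \eqref{b}, i.e.\ the bound for $\int_{Q(x_{i},l^{\lambda})}|T_{j,i}u(x)|\,dx$.

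For that term, the key observation is that when $\varrho=0$ the symbol $a(x_{i},\xi)\psi(2^{-j}\xi)$ lies in $S^{0}_{0,0}$ with all seminorms $\lesssim 2^{-jn/2}$ (using $m=-n/2$), so $T_{j,i}$ is bounded on $L^{2}$ with operator norm $\lesssim 2^{-jn/2}$. Unlike the $\varrho>0$ case, there is no decay in $\xi$-differentiation, so the near/far splitting radius must be tuned to exploit the $N$ integrations by parts optimally: each $\partial_{\xi}$ costs nothing but each factor of $|y-x_{i}|$ in the kernel is paid for by one $\partial_{\xi}$, and the $L^{2}$ norm of $\partial_{\xi}^{\alpha}(a\psi(2^{-j}\cdot))$ over $E_{j}$ is $\lesssim 2^{-jn/2}2^{jn/2}=O(1)$ after accounting for the measure $|E_{j}|\sim 2^{jn}$. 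Balancing the near part (Hölder plus $L^{2}$-boundedness, giving a factor $\approx 2^{-jn/2}$ times the square root of the measure of the localization cube) against the far part (giving $\approx \rho^{n/2-N}$ times $2^{-jn/2}2^{jn/2}$ where $\rho$ is the radius) forces the choice $\rho = l^{n\lambda/2N}2^{jn/2N}$ recorded in the remark, and this is exactly what makes the exponent $1-\frac{n}{2N}$ appear in place of $1-\varrho$.

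Concretely, I would set $u_{i,1}=u\,\chi_{Q(x_{i},2\rho)}$ with $\rho=l^{n\lambda/2N}2^{jn/2N}$ and $u_{i,2}=u-u_{i,1}$. For $M_{1}=\int_{Q(x_{i},l^{\lambda})}|T_{j,i}u_{i,1}|\,dx$, Hölder on the cube $Q(x_{i},l^{\lambda})$ and $\|T_{j,i}u_{i,1}\|_{L^{2}}\lesssim 2^{-jn/2}\|u_{i,1}\|_{L^{2}}\lesssim 2^{-jn/2}\rho^{n/2}M_{2}u(x_{0})$ (here one checks $x_{0}\in Q(x_{i},2\rho)$, which holds once $l^{\lambda}\lesssim\rho$, guaranteed by $\lambda\leq\frac{2}{1-\delta}$ and $N>\frac{n}{1-\delta}$) give $M_{1}\lesssim l^{\lambda n/2}2^{-jn/2}\rho^{n/2}M_{2}u(x_{0})$. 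For $M_{2}$, on the far region $|y-x_{i}|>\rho$ one has $|y-x|\geq|y-x_{i}|/2$ and $|y-x_{0}|\sim|y-x_{i}|$, so integrating by parts $N$ times against $e^{i\langle x-y,\xi\rangle}$, applying Parseval, and using $\||y-x_{i}|^{-N}u\|_{L^{2}(|y-x_{i}|>\rho)}\lesssim \rho^{-N+n/2}M_{2}u(x_{0})$ yields, after integrating in $x$ over $Q(x_{i},l^{\lambda})$, a bound $\lesssim l^{\lambda n}2^{-jn/2}\rho^{n/2}\cdot\rho^{-N}2^{jn/2}M_{2}u(x_{0})$ — wait, more precisely the $\xi$-integral contributes $\lesssim (2^{jn})^{1/2}\cdot 2^{-jn/2}=O(1)$, so one gets $M_{2}\lesssim l^{\lambda n/2}\rho^{n/2}\rho^{-N}M_{2}u(x_{0})$ up to the right powers. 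Summing $M_{1}+M_{2}$ over $i$, multiplying by $|Q|^{-1}$ and $L^{n}\lesssim l^{n(1-\lambda)}$, and substituting $\rho=l^{n\lambda/2N}2^{jn/2N}$ collapses both terms into $M_{2}u(x_{0})\big(l^{\lambda}2^{j\delta}+2^{-j\frac{n}{2}(1-\frac{n}{2N})}l^{-\frac{\lambda n}{2}(1-\frac{n}{2N})}\big)$, as claimed; the case $\lambda=1$ is handled by freezing at $x_{0}$ instead. The main obstacle is bookkeeping: verifying that the chosen radius $\rho$ simultaneously satisfies $l^{\lambda}\lesssim\rho$ (so the localization cube contains $x_{0}$ and the near estimate is legitimate) and is small enough that the far estimate converges, which is precisely where the constraints $\lambda\leq\frac{2}{1-\delta}$ and $N>\frac{n}{1-\delta}$ are used, together with $2^{j}l<1$ from the hypothesis of the covering.
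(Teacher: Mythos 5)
Your proposal is correct and is essentially the paper's intended argument: the paper gives no separate proof of this lemma, only the preceding remark stating that one repeats the proof of Lemma \ref{La} with $u_{i,1}$ redefined using the radius $l^{\frac{n\lambda}{2N}}2^{\frac{jn}{2N}}$, which is exactly the modification you carry out (and your balancing computation correctly explains why the two far/near terms of Lemma \ref{La} merge into the single term $2^{-j\frac{n}{2}(1-\frac{n}{2N})}l^{-\frac{\lambda n}{2}(1-\frac{n}{2N})}$ here). The only caveats are cosmetic: the containment you actually need is $l\lesssim\rho$ (not just $l^{\lambda}\lesssim\rho$), which does hold since $\frac{n\lambda}{2N}<1$, and the stray $l^{\lambda n/2}$ in your $M_{2}$ bound should be $l^{\lambda n}$.
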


Next we are going to prove Theorem \ref{T1}. Here we only consider the case $\varrho<\delta<1$ since the other case has been considered in \cite{Chanillo,Journe,Miyachi,Miller}. Before illustrating the proof, we recall a fundamental $L^{2}$-estimate for the pseudo-differential operators due to Hounie \cite{Hounie}.

\begin{thm}(Hounie \cite{Hounie})\label{L2}
Assume $0\leq\varrho\leq1,$ $0\leq\delta<1$ and $a\in S^{\min(0,\frac{n}{2}(\varrho-\delta))}_{\varrho,\delta}$. Then the operator $T_{a}$ is bounded on $L^{2}.$
\end{thm}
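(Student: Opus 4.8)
The plan is to localize in frequency and invoke the Cotlar--Stein almost--orthogonality lemma, isolating the single step at which the value $m=\min\{0,\tfrac n2(\varrho-\delta)\}$ is forced. First I would dispose of the case $\varrho\ge\delta$: there $m=0$ and $a\in S^{0}_{\varrho,\delta}$ with $0\le\delta\le\varrho\le1$, $\delta<1$, so the classical Calder\'on--Vaillancourt theorem (or, for $\delta<\varrho$, H\"ormander \cite{Hormander2}) already gives boundedness on $L^{2}$. Henceforth assume $\varrho<\delta$, so that $m=\tfrac n2(\varrho-\delta)<0$.

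Using Lemma \ref{L1} and the decomposition \eqref{de} write $T_{a}=\sum_{j\ge0}T_{j}$ with $T_{j}=T_{b_{j}}$, $b_{j}(x,\xi)=a(x,\xi)\psi(2^{-j}\xi)$ supported in $\{|\xi|\sim 2^{j}\}$ (the low--frequency part, whose symbol has compact $\xi$--support, being carried along as one harmless extra term). The first task is the off--diagonal input to Cotlar--Stein. Since $\delta<1$, for each $\xi$ the function $x\mapsto b_{j}(x,\xi)$ has all derivatives $O(2^{j(m+\delta|\beta|)})$, so a Littlewood--Paley decomposition in $x$ shows that, modulo an operator of norm $O(2^{-\varepsilon j})$, the symbol $b_{j}$ is a superposition of pieces with $x$--Fourier support in $\{|\eta|\le 2^{j(\delta+\lambda)}\}$ for some fixed $\lambda$ with $\delta+\lambda<1$; hence $T_{j}$ maps functions of frequency $\sim 2^{j}$ to functions of frequency $\sim 2^{j}$ up to such an error. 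Consequently, for $|j-k|$ larger than an absolute constant, disjointness of the annuli gives $T_{j}T_{k}^{*}=0$ identically and $\|T_{j}^{*}T_{k}\|\lesssim 2^{-\varepsilon\max(j,k)}$. Granting the diagonal bound $\|T_{j}\|_{L^{2}\to L^{2}}\lesssim 1$ of the next step, this makes $\sup_{j}\sum_{k}\big(\|T_{j}T_{k}^{*}\|^{1/2}+\|T_{j}^{*}T_{k}\|^{1/2}\big)$ finite, and Cotlar--Stein yields $\|T_{a}\|_{L^{2}\to L^{2}}\lesssim 1$.

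The core of the argument is the diagonal estimate $\|T_{j}\|_{L^{2}\to L^{2}}\lesssim 1$ --- a Calder\'on--Vaillancourt estimate carrying the sharp loss of $\tfrac n2(\delta-\varrho)$ derivatives. I would decompose $b_{j}$ at the scales on which it genuinely varies: partitions of unity $1=\sum_{\nu}\chi_{\nu}(\xi)$ subordinate to a covering of $\{|\xi|\sim 2^{j}\}$ by $\sim 2^{jn(1-\varrho)}$ balls $B_{\nu}$ of radius $2^{j\varrho}$, and $1=\sum_{\mu}\theta_{\mu}(x)$ subordinate to a tiling of $\mathbb R^{n}$ by cubes $Q_{\mu}$ of side $2^{-j\delta}$, and set $A_{\mu\nu}=T_{\theta_{\mu}\chi_{\nu}b_{j}}$, so $T_{j}=\sum_{\mu,\nu}A_{\mu\nu}$. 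The kernel of $A_{\mu\nu}$ is supported in $x\in Q_{\mu}$, is $O(2^{jm}2^{jn\varrho})$, and decays rapidly once $2^{j\varrho}|x-y|\gg 1$, so Schur's test gives $\|A_{\mu\nu}\|_{L^{2}\to L^{2}}\lesssim 2^{jm}2^{\frac n2 j(\varrho-\delta)}$. Next one computes that the kernel of $A_{\mu\nu}A_{\mu'\nu'}^{*}$ equals $c\int e^{i(x-y)\xi}(\theta_{\mu}\chi_{\nu}b_{j})(x,\xi)\,\overline{(\theta_{\mu'}\chi_{\nu'}b_{j})(y,\xi)}\,d\xi$, which vanishes unless $B_{\nu}\cap B_{\nu'}\ne\emptyset$ (so only $O(1)$ values of $\nu'$ contribute) and, upon integrating by parts in $\xi$, decays rapidly unless $|x_{\mu}-x_{\mu'}|\lesssim 2^{-j\varrho}$ (so only $\sim 2^{jn(\delta-\varrho)}$ values of $\mu'$ contribute); the product $A_{\mu\nu}^{*}A_{\mu'\nu'}$ is handled symmetrically, with the roles of $x$ and $\xi$ interchanged. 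Hence
\[
\sup_{\mu,\nu}\;\sum_{\mu',\nu'}\|A_{\mu\nu}A_{\mu'\nu'}^{*}\|^{1/2}\lesssim 2^{jn(\delta-\varrho)}\cdot 2^{jm}\,2^{\frac n2 j(\varrho-\delta)}=2^{j\left(m+\frac n2(\delta-\varrho)\right)}=1 ,
\]
using $m=\tfrac n2(\varrho-\delta)$, and likewise for $\sum_{\mu',\nu'}\|A_{\mu\nu}^{*}A_{\mu'\nu'}\|^{1/2}$; Cotlar--Stein then gives $\|T_{j}\|=\|\sum_{\mu,\nu}A_{\mu\nu}\|\lesssim 1$. (The two applications of Cotlar--Stein can also be merged into a single one over the triple index $(j,\mu,\nu)$.)

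The hard part is precisely this last step. One must take the decomposition scales to be exactly $2^{-j\delta}$ in $x$ and $2^{j\varrho}$ in $\xi$, establish the rapid decay of $A_{\mu\nu}A_{\mu'\nu'}^{*}$ in $|x_{\mu}-x_{\mu'}|$ together with its exact vanishing in $\nu$, and then balance the number $\sim 2^{jn(\delta-\varrho)}$ of interacting pieces against their individual operator norms $\lesssim 2^{jm}2^{\frac n2 j(\varrho-\delta)}$; this balance closes if and only if $m\le\tfrac n2(\varrho-\delta)$, which is what makes that exponent sharp. The remaining ingredients --- the near frequency--localization of $T_{j}$ coming from $\delta<1$, the elementary Schur and integration--by--parts estimates, and the assembly via Cotlar--Stein --- should be routine.
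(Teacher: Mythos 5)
The paper does not prove this statement: Theorem \ref{L2} is imported verbatim from Hounie and \'Alvarez--Hounie \cite{Hounie,Hounie1} and used as a black box, so there is no internal proof to compare against. Your proposal supplies an actual proof, and it is the standard one --- essentially the Cotlar--Stein/second-microlocalization argument behind the Calder\'on--Vaillancourt theorem with loss, which is also how the cited sources and H\"ormander's treatment proceed. The skeleton is sound: the case $\varrho\ge\delta$ reduces to $S^{0}_{\delta,\delta}$, $\delta<1$ (Calder\'on--Vaillancourt); for $\varrho<\delta$ the diagonal bound is correct as computed, since $\|A_{\mu\nu}\|\lesssim 2^{jm}2^{\frac n2 j(\varrho-\delta)}$ by Schur ($\xi$-support of volume $2^{jn\varrho}$, $x$-support of volume $2^{-jn\delta}$, kernel decay at scale $2^{-j\varrho}$), about $2^{jn(\delta-\varrho)}$ pieces interact, and the balance $2^{jn(\delta-\varrho)}\cdot 2^{jm}2^{\frac n2 j(\varrho-\delta)}=2^{j(m+\frac n2(\delta-\varrho))}$ closes exactly at $m=\frac n2(\varrho-\delta)$. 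You also correctly locate where $\delta<1$ enters: each $T_{j}$ is bounded even when $\delta=1$, and it is the almost-orthogonality in $j$ that fails there.

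One step deserves more care than your sketch gives it. For the $j$-sum you need $\|T_{j}^{*}T_{k}\|\lesssim 2^{-\varepsilon\max(j,k)}$ (or $2^{-\varepsilon|j-k|}$); decomposing $T_{j}=\tilde P_{j}T_{j}+R_{j}$ with $\|R_{j}\|=O(2^{-\varepsilon j})$ only controls the cross term $R_{j}^{*}\tilde P_{k}T_{k}$ by $2^{-\varepsilon\min(j,k)}$, and $\sum_{k>j}2^{-\varepsilon j/2}$ diverges. You must either show that the portion of $T_{j}$ with output frequency near $2^{k}\gg 2^{j}$ has norm $O_{M}(2^{-Mk})$ (because producing such output requires symbol $x$-frequency $\gtrsim 2^{k}$, and those Littlewood--Paley pieces of $b_{j}$ are $O_{M}(2^{-M(1-\delta)k})$ in the relevant norms), or, more directly, integrate by parts in $y$ in the frequency kernel $\int \overline{b_{j}(y,\xi)}e^{iy(\eta-\xi)}b_{k}(y,\eta)\,dy$ using $|\xi-\eta|\gtrsim 2^{\max(j,k)}$ against the cost $2^{\delta\max(j,k)}$ per derivative. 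This is standard but is genuinely needed; with it supplied (together with the usual localization making the non-absolutely-convergent $y$- and $\eta$-integrals legitimate), your argument is complete.
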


\begin{proof}[Proof of Theorem \ref{T1}]

Without loss of generality, we assume that the symbol $a(x,\xi)$ vanishes for $|\xi|\leq 1$. Let $Q=Q(x_{0},l)$ denote the cube centered at $x_{0}$ with the side length $l.$ For any fixed cube $Q$, we are going to prove that
\begin{eqnarray}\label{E0}
\frac{1}{|Q|}\int_{Q}|T_{a}u(x)-C_{Q}|dx\leq C M_{2}u(x_{0}),
\end{eqnarray}
where $C_{Q}=\frac{1}{|Q|}\int_{Q}T_{a}u(y)dy$. As usual the proof will be divided into two cases.

Case 1. $l<1.$\quad Note that the left hand of (\ref{E0}) can be controlled by
\begin{eqnarray}\label{E1}
\frac{1}{|Q|^{2}}\int_{Q}\int_{Q}|T_{a}u(x)-T_{a}u(y)|dydx.
\end{eqnarray}
We compose the operator $T_{a}$ as (\ref{de}), then estimate (\ref{E1}) by
\begin{eqnarray}
\sum\limits_{j=1}^{\infty}\frac{1}{|Q|^{2}}\int_{Q}\int_{Q}|T_{j}u(x)-T_{j}u(z)|dzdx.
\end{eqnarray}
Since $l<1$, there is a positive integer $j_{0}$ such that $2^{j_{0}}\sim l^{-1}$. Lemma (\ref{L3}) implies that
\begin{eqnarray*}
|T_{j}u(x)-T_{j}u(z)|\leq  \int_{\mathbb{R}^{n}}|u(y)||K_{j}(x,x-y)-K_{j}(z,z-y)|dy\leq C 2^{j}|x-z|M_{2}u(x_{0})
\end{eqnarray*}
So we have
\begin{eqnarray}
\sum\limits_{j=1}^{j_{0}}\frac{1}{|Q|^{2}}\int_{Q}\int_{Q}|T_{j}u(x)-T_{j}u(z)|dzdx\lesssim M_{2}u(x_{0})l\sum\limits_{j=1}^{j_{0}}2^{j}\lesssim M_{2}u(x_{0}).
\end{eqnarray}

Clearly, it remains to show
\begin{eqnarray}\label{E6}
\sum\limits_{j=j_{0}}^{\infty}\frac{1}{|Q|^{2}}\int_{Q}\int_{Q}|T_{j}u(x)-T_{j}u(z)|dzdx\lesssim M_{2}u(x_{0}).
\end{eqnarray}

For $0<\varrho<\delta<1$, we claim that there is a positive integer $j_{\gamma}$ with $2^{j_{\gamma}}\sim l^{-\frac{1}{\delta^{\gamma}}}>l^{-\frac{1}{\varrho}}$ such that
\begin{eqnarray}\label{E3}
\sum\limits_{j=j_{1}}^{j_{\gamma}}\frac{1}{|Q|}\int_{Q(x_{0},l)}|T_{j}u(x)|dx
&\leq& \tilde{C}_{\gamma}M_{2}u(x_{0}),
\end{eqnarray}
where $\gamma\geq1$ is positive integer and $\tilde{C}_{\gamma}$ is a constant independent of $l$.  Actually, for any positive integer $k$ satisfy $\frac{1}{\delta^{k}}\leq \frac{1}{\varrho}$, there are positive integer $j_{k},j_{k+1}$ satisfy $2^{j_{k}}\sim l^{-\frac{1}{\delta^{k}}},2^{j_{k+1}}\sim l^{-\frac{1}{\delta^{k+1}}}$, respectively. Take $\lambda=\frac{1}{\delta^{k}}$ in Lemma \ref{La}.
Then we have
\begin{eqnarray}\label{3}
\sum\limits_{j=j_{k}}^{j_{k+1}}\frac{1}{|Q|}\int_{Q(x_{0},l)}|T_{j}u(x)|dx
&\leq& C_{k}M_{2}u(x_{0})\sum\limits_{j=j_{k}}^{j_{k+1}}
\big(l^{\frac{1}{\delta^{k}}}2^{j\delta}+2^{jm}l^{-\frac{ n}{2\delta^{k}}(1-\varrho)}+l^{\frac{\varrho}{\delta^{k}}(\frac{n}{2}-N)}2^{j\varrho(\frac{n}{2}-N)}\big)\nonumber\\
&\leq& C_{k}M_{2}u(x_{0}).
\end{eqnarray}
Notice that there is a positive integer $\gamma$ such that $\frac{1}{\delta^{\gamma-1}}\leq\frac{1}{\varrho}<\frac{1}{\delta^{\gamma}}$ since $0<\varrho<\delta<1$. Furthermore, there are positive integer $j_{\gamma},j_{\gamma-1}$ with $2^{j_{\gamma-1}}\sim l^{-\frac{1}{\delta^{\gamma-1}}}\leq l^{-\frac{1}{\varrho}}<l^{-\frac{1}{\delta^{\gamma}}}\sim 2^{j_{\gamma}}$. So we can write
\begin{eqnarray*}
\sum\limits_{j=j_{1}}^{j_{\gamma}}\frac{1}{|Q|}\int_{Q(x_{0},l)}|T_{j}u(x)|dx
=\big(\sum\limits_{j=j_{1}}^{j_{2}}+\sum\limits_{j=j_{2}}^{j_{3}}+...\sum\limits_{j=j_{k}}^{j_{k+1}}+...\sum\limits_{j=j_{\gamma-1}}^{j_{\gamma}}\big)
\frac{1}{|Q|}\int_{Q(x_{0},l)}|T_{j}u(x)|dx.
\end{eqnarray*}
Combine this and \eqref{3}, we can get \eqref{E3} immediately.

For $0=\varrho<\delta<1$, \eqref{E3} is valid for some positive integer $j_{\gamma}$ with $2^{j_{\gamma}}\sim l^{-\frac{1}{\delta^{\gamma}}}>l^{-\frac{2}{1-\delta}}$. The only change in the argument is that we need Lemma \ref{La0}, instead of Lemma \ref{La}.

Then, \eqref{E6} and \eqref{E3} implies that it remains to show
\begin{eqnarray}\label{E06}
\sum\limits_{j=j_{\gamma}}^{\infty}\frac{1}{|Q|}\int_{Q}|T_{j}u(x)|dx\leq CM_{2}u(x_{0})
\end{eqnarray}
In order to get this estimate, fixed $N_{0}$ such that $\frac{n}{2}<N_{0}<\frac{n}{2}(1+\frac{1-\delta}{\varrho})$ (if $\varrho=0$, fixed $N_{0}>\frac{2}{1-\delta}$)  and denote
$$\Gamma=2^{\frac{j}{N_{0}}\big(\varrho(\frac{n}{2}-N_{0})+\frac{n}{2}(1-\delta)\big)}l^{\frac{n}{2N_{0}}}.$$
Set $u_{1}(x)=u(x)\chi_{Q(x_{0},2\Gamma)}(x)$ and $u_{2}(x)=u(x)-u_{1}(x)$.
Then
\begin{eqnarray}\label{Ea}
\frac{1}{|Q|}\int_{Q(x_{0},l)}|T_{j}u(x)|dx
\leq\frac{1}{|Q|}\int_{Q(x_{0},l)}|T_{j}u_{1}(x)|dx+
\frac{1}{|Q|}\int_{Q(x_{0},l)}|T_{j}u_{2}(x)|dx
\end{eqnarray}

Notice that $a(x,\xi) \psi(2^{-j}\xi)\in S^{-\frac{n}{2}(\delta-\varrho)}_{\varrho,\delta}$ with bounds $\lesssim 2^{-j\frac{n}{2}(1-\delta)}$. H\"{o}lder's inequality and the $L^{2}$-estimate of $T_{j}$ give that
\begin{eqnarray}\label{E10}
\frac{1}{|Q|}\int_{Q}|T_{j}u_{1}(x)|dx\lesssim 2^{-j\frac{n}{2}(1-\delta)}l^{-\frac{n}{2}}\|u_{1}\|_{L^2}\lesssim 2^{-j\frac{n}{2}(1-\delta)}l^{-\frac{n}{2}}\Gamma^{\frac{n}{2}}M_{2}u(x_{0})
\end{eqnarray}

Notice that $\Gamma>l$. We have $|y-x|\sim|y-x_{0}|$ for $\forall x\in Q(x_{0},l)$ and $\forall y\in Q^{C}(x_{0},2\Gamma)$. So direct computation show that
\begin{eqnarray}\label{E11}
|T_{j}u_{2}(x)|
&\leq&\int_{|y-x_{0}|\geq 2\Gamma}|K_{j}(x,x-y)||u(y)|dy
\lesssim \Gamma^{(\frac{n}{2}-N_{0})}2^{j\varrho(\frac{n}{2}-N_{0})} M_{2}u(x_{0})
\end{eqnarray}
\begin{eqnarray*}
\sum\limits_{j=j_{\gamma}}^{\infty}(2^{-j\frac{n}{2}(1-\delta)}l^{-\frac{n}{2}}\Gamma^{\frac{n}{2}}+\Gamma^{(\frac{n}{2}-N_{0})}2^{j\varrho(\frac{n}{2}-N_{0})})
&=&2l^{-\frac{n}{2}(1-\frac{n}{2N_{0}})}\sum\limits_{j=j_{\gamma}}^{\infty}
2^{-j\big(\frac{n}{2}(1-\delta)(1-\frac{n}{2N_{0}})+\frac{n\varrho}{2N_{0}}(N_{0}-\frac{n}{2})\big)}\\
&\lesssim&l^{-\frac{n}{2}(1-\frac{n}{2N_{0}})}2^{-j_{\gamma}\big(\frac{n}{2}(1-\delta)(1-\frac{n}{2N_{0}})+\frac{n\varrho}{2N_{0}}(N_{0}-\frac{n}{2})\big)}\\
&\leq&l^{-\frac{n}{2}(1-\frac{n}{2N_{0}})}l^{\frac{1}{\varrho}\big(\frac{n}{2}(1-\delta)(1-\frac{n}{2N_{0}})+\frac{n\varrho}{2N_{0}}(N_{0}-\frac{n}{2})\big)}\leq1
\end{eqnarray*}
Form \eqref{Ea}, \eqref{E10} and \eqref{E11}, it follows that the left hand of \eqref{E06} is bounded by

\begin{eqnarray}\label{E60}
&&M_{2}u(x_{0})
\sum\limits_{j=j_{\gamma}}^{\infty}(2^{-j\frac{n}{2}(1-\delta)}l^{-\frac{n}{2}}\Gamma^{\frac{n}{2}}+\Gamma^{(\frac{n}{2}-N_{0})}2^{j\varrho(\frac{n}{2}-N_{0})})\nonumber\\
&\lesssim&M_{2}u(x_{0})
l^{-\frac{n}{2}(1-\frac{n}{2N_{0}})}2^{-j_{\gamma}\big(\frac{n}{2}(1-\delta)(1-\frac{n}{2N_{0}})+\frac{n\varrho}{2N_{0}}(N_{0}-\frac{n}{2})\big)}
\leq M_{2}u(x_{0}).
\end{eqnarray}
The last inequality holds because of $2^{j_{\gamma}} >l^{-\frac{1}{\varrho}}$ and $N_{0}>\frac{n}{2}$. Thus, the proof in this case is finished.

Case 2. $l\geq1.$\quad
Set $u_{3}(x)=u(x)\chi_{{Q(x_{0},2l)}}(x)$ and $u_{4}(x)=u(x)-u_{3}(x)$. Then
$$T_{a}u(x)=T_{a}u_{3}(x)+T_{a}u_{4}(x).$$
We estimate $T_{a}u_{3}$ firstly. Note that $S^{-\frac{n}{2}(1-\varrho)}_{\varrho,\delta}\subset S^{\min(0,\frac{n}{2}(\varrho-\delta))}_{\varrho,\delta}$ for any $0\leq\varrho\leq1$ and $0\leq\delta<1$. Schwartz's inequality and Lemma \ref{L2} yield
\begin{eqnarray}\label{Eb}
\frac{1}{|Q|}\int_{Q}|T_{a}u_{3}(x)|dx\leq|Q|^{-\frac{1}{2}}\|T_{a}u_{3}\|_{L^2}\lesssim |Q|^{-\frac{1}{2}}\|u_{3}\|_{L^2}\lesssim M_{2}u(x_{0})
\end{eqnarray}
 As to $T_{a}u_{4}$, direct computation show
\begin{eqnarray*}
|T_{a}u_{4}(x)|
&\lesssim& M_{2}u(x_{0}),
\end{eqnarray*}
which implies that
\begin{eqnarray}\label{Ec}
\frac{1}{|Q|}\int_{Q}|T_{a}u_{4}(x)|dx\lesssim M_{2}u(x_{0}).
\end{eqnarray}
From (\ref{Eb}) and (\ref{Ec}), it follows that
\begin{eqnarray*}
\frac{1}{|Q|}\int_{Q}|T_{a}u(x)|dx\lesssim M_{2}u(x_{0}).
\end{eqnarray*}
Combining all these estimates, we could get the desired conclusion.

\end{proof}

\section{The proof of Theorem \ref{weight}}
In this section, Theorem \ref{weight} will be proved and the method is inspired by Chen \cite{Chen}. The outline of this proof is that the case $r=2$ and $r=1$ is proved firstly and then the case $1\leq r\leq2$ can be got by some interpolations. Notice that the case $r=2$ is a direct conclusion of Theorem \ref{T1}, that is,

\begin{prop}\label{P1}
Let $0\leq\varrho\leq1,$ $0\leq\delta<1$ and $m=-\frac{n}{2}(1-\varrho).$ Suppose $a(x,\xi)\in S^{m}_{\varrho,\delta}$, $\forall \omega\in A_{p/2}$ with $p>2$. Then there is a constant $C$ independent of $a$ and $u$, such that
$$\|T_{a}u\|_{L^{p}_{\omega}}\leq C\|u\|_{L^{p}_{\omega}}.$$
\end{prop}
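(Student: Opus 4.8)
The plan is to obtain Proposition \ref{P1} as a quick consequence of Theorem \ref{T1}, the Fefferman--Stein sharp maximal inequality, and the weighted boundedness of the Hardy--Littlewood maximal operator. Since here $m=-\frac n2(1-\varrho)$, Theorem \ref{T1} gives, for every Schwartz function $u$,
\begin{equation*}
(T_{a}u)^{\sharp}(x)\leq C\,M_{2}u(x),\qquad x\in\mathbb{R}^{n},
\end{equation*}
with $C$ independent of $a$. Because $S^{-\frac n2(1-\varrho)}_{\varrho,\delta}\subset S^{\min(0,\frac n2(\varrho-\delta))}_{\varrho,\delta}$, Theorem \ref{L2} shows $T_{a}u\in L^{2}(\mathbb{R}^{n})$ for $u$ Schwartz, so the Fefferman--Stein inequality applies to $f=T_{a}u$ and, for $\omega\in A_{\infty}$,
\begin{equation*}
\|T_{a}u\|_{L^{p}_{\omega}}\leq C\,\|(T_{a}u)^{\sharp}\|_{L^{p}_{\omega}}\leq C\,\|M_{2}u\|_{L^{p}_{\omega}}.
\end{equation*}

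To finish I would bound $\|M_{2}u\|_{L^{p}_{\omega}}$ by $\|u\|_{L^{p}_{\omega}}$. Writing $M_{2}u=\big(M(|u|^{2})\big)^{1/2}$ and raising to the $p$-th power,
\begin{equation*}
\|M_{2}u\|_{L^{p}_{\omega}}^{p}=\int_{\mathbb{R}^{n}}\big(M(|u|^{2})(x)\big)^{p/2}\,\omega(x)\,dx=\big\|M(|u|^{2})\big\|_{L^{p/2}_{\omega}}^{p/2}.
\end{equation*}
Since $p>2$, the exponent $p/2$ exceeds $1$, and by hypothesis $\omega\in A_{p/2}$; hence $M$ is bounded on $L^{p/2}_{\omega}$, which gives
\begin{equation*}
\big\|M(|u|^{2})\big\|_{L^{p/2}_{\omega}}\leq C\,\big\||u|^{2}\big\|_{L^{p/2}_{\omega}}=C\,\|u\|_{L^{p}_{\omega}}^{2}.
\end{equation*}
Combining the last three displays yields $\|T_{a}u\|_{L^{p}_{\omega}}\leq C\|u\|_{L^{p}_{\omega}}$ for Schwartz $u$, and the general case follows since the Schwartz class is dense in $L^{p}_{\omega}$ (as $\omega\in A_{p/2}\subset A_{p}$).

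The only genuinely delicate point is the applicability of the Fefferman--Stein inequality in the weighted setting, i.e.\ the need for some a priori control on $T_{a}u$ before asserting $\|T_{a}u\|_{L^{p}_{\omega}}\lesssim\|(T_{a}u)^{\sharp}\|_{L^{p}_{\omega}}$; this is exactly what the inclusion $S^{-\frac n2(1-\varrho)}_{\varrho,\delta}\subset S^{\min(0,\frac n2(\varrho-\delta))}_{\varrho,\delta}$ together with Theorem \ref{L2} supplies, since they place $T_{a}u$ in $L^{2}(\mathbb{R}^{n})$ for Schwartz $u$. (Alternatively one can run the argument on the truncations $\min(|T_{a}u|,N)\chi_{Q(0,R)}$ and let $N,R\to\infty$ by monotone convergence.) Everything else is routine, so I expect this proof to be short.
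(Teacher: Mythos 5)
Your proposal is correct and is exactly the route the paper intends: the paper offers no separate proof of Proposition \ref{P1}, stating only that it is ``a direct conclusion of Theorem \ref{T1},'' and the standard chain you write out (Fefferman--Stein sharp maximal inequality for $\omega\in A_\infty$, then $\|M_2u\|_{L^p_\omega}=\|M(|u|^2)\|_{L^{p/2}_\omega}^{1/2}\lesssim\|u\|_{L^p_\omega}$ via Muckenhoupt's theorem with $\omega\in A_{p/2}$, $p/2>1$) is precisely the implicit argument. Your attention to the a priori integrability needed for the weighted Fefferman--Stein inequality, supplied by Theorem \ref{L2}, is a detail the paper glosses over but is handled correctly.
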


Next the case $r=1$ is considered.
\begin{prop}\label{P2}
Let $0\leq\varrho\leq1,$ $0\leq\delta<1$ and $m=-n(1-\varrho).$ Suppose $a(x,\xi)\in S^{m}_{\varrho,\delta}$, $\omega\in A_{p}$ with $1<p<\infty$. Then there is a constant $C$ independent of $a$ and $u$, such that
$$\|T_{a}u\|_{L^{p}_{\omega}}\leq C\|u\|_{L^{p}_{\omega}}.$$
\end{prop}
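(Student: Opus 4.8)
The plan is to prove Proposition~\ref{P2} by a standard Calder\'on--Zygmund argument based on a pointwise kernel estimate, exploiting the stronger decay $m=-n(1-\varrho)$ (twice the decay available in Theorem~\ref{T1}). First I would record the $L^2$ boundedness of $T_a$: since $S^{-n(1-\varrho)}_{\varrho,\delta}\subset S^{\min(0,\frac n2(\varrho-\delta))}_{\varrho,\delta}$ for $0\le\varrho\le1$, $0\le\delta<1$, Theorem~\ref{L2} gives $\|T_au\|_{L^2}\lesssim\|u\|_{L^2}$. By Muckenhoupt's theory of weighted norm inequalities, the proposition follows once one knows that the kernel $K(x,w)$ of $T_a$ (with $K(x,x-y)$ the Schwartz kernel) satisfies a suitable H\"ormander-type regularity condition together with the size bound ensuring it is a Calder\'on--Zygmund kernel; in fact it suffices to show $T_a$ maps $L^\infty_c$ boundedly into $\mathrm{BMO}$ and that its kernel obeys a H\"ormander condition, or, more directly, to verify that $(T_au)^\sharp(x)\lesssim Mu(x)$ plus $T_a$ bounded on $L^2$, whence the $A_p$ weighted bound for all $1<p<\infty$ is classical.

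Concretely, I would decompose $T_a=\sum_{j\ge0}T_j$ as in \eqref{de}, with kernels $K_j(x,w)=\int e^{i\langle w,\xi\rangle}a(x,\xi)\psi(2^{-j}\xi)\,d\xi$. Integration by parts in $\xi$ $N$ times, using $|\partial^\alpha_\xi a(x,\xi)|\lesssim\langle\xi\rangle^{m-\varrho|\alpha|}$ on $\supp\psi(2^{-j}\cdot)$, yields
\begin{eqnarray*}
|K_j(x,w)|\lesssim 2^{j(m+n)}\big(1+2^{j\varrho}|w|\big)^{-N}
\end{eqnarray*}
for every $N$, and similarly a factor $2^{j\delta}$ (resp.\ $2^j$) for each $x$- (resp.\ $w$-) derivative. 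Summing the geometric-type series in $j$, split at $2^j\sim|w|^{-1}$: for $m+n=n\varrho$ one gets $|K(x,w)|\lesssim|w|^{-n}$ and, differentiating, $|\nabla_w K(x,w)|+|\nabla_x K(x,w)|\lesssim|w|^{-n-\varepsilon}$ for a small $\varepsilon>0$ coming from the extra $2^{j\delta}$ or $2^j$ versus the gain; here one uses $\delta<1$ so that $2^{j(\delta-1)}$ is summable after the split (the borderline powers must be checked carefully). This gives the size bound and the H\"ormander regularity condition
\begin{eqnarray*}
\int_{|x-y|>2|y-y'|}\big(|K(x,y)-K(x,y')|+|K(y,x)-K(y',x)|\big)\,dx\lesssim1 .
\end{eqnarray*}

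With the $L^2$ bound and these kernel estimates in hand, $T_a$ is a Calder\'on--Zygmund operator, so by the Coifman--Fefferman inequality $(T_au)^\sharp(x)\lesssim Mu(x)$, and the weighted $L^p$ boundedness for all $\omega\in A_p$, $1<p<\infty$, follows from the Fefferman--Stein sharp-function inequality together with the $A_\infty$ property of $A_p$ weights. I expect the main obstacle to be the endpoint bookkeeping in the $j$-summation for the smoothness estimates: the terms carrying $2^{j\delta}$ or $2^{j}$ from the $x$- and $w$-derivatives are exactly at the critical homogeneity, and one must use $\delta<1$ (and the split point $2^j\sim|w|^{-1}$ adapted to the $2^{j\varrho}$ scaling in the kernel) to extract a genuine $|w|^{-n-\varepsilon}$ decay rather than merely $|w|^{-n}$; an alternative, if the direct H\"ormander condition is delicate when $\varrho<\delta$, is to mimic the proof of Theorem~\ref{T1} to obtain $(T_au)^\sharp(x)\lesssim Mu(x)$ directly, replacing $M_2u$ by $Mu$ throughout because the decay is now $-n(1-\varrho)$ instead of $-\tfrac n2(1-\varrho)$, and then invoke the Fefferman--Stein inequality as above.
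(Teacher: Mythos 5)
There is a genuine gap: for $\varrho<1$ the operator $T_a$ with $a\in S^{-n(1-\varrho)}_{\varrho,\delta}$ is not a standard Calder\'on--Zygmund operator, and your derivation of the kernel regularity is where the argument breaks. The integration-by-parts decay of $K_j$ is in the variable $2^{j\varrho}|w|$, so the $j$-sum must be split at $2^j\sim|w|^{-1/\varrho}$, not at $2^j\sim|w|^{-1}$ (at your split point the tail $\sum_{2^j>|w|^{-1}}2^{jn\varrho}(2^{j\varrho}|w|)^{-N}$ does not converge to $|w|^{-n}$, since $2^{j\varrho}|w|$ is still small there). With the correct split the size bound $|K(x,w)|\lesssim|w|^{-n}$ does survive (for $\varrho>0$), but differentiation in $w$ costs a factor $|\xi|\sim 2^j$, and evaluating the sum at $2^j\sim|w|^{-1/\varrho}$ gives $|\nabla_wK(x,w)|\lesssim|w|^{-n-1/\varrho}$, which is strictly worse than $|w|^{-n-1}$ when $\varrho<1$; correspondingly, $\int_{|x-y|>2|y-y'|}|K(x,y)-K(x,y')|\,dx$ picks up the contribution $\sum_{|y-y'|^{-1}\le 2^j\le|y-y'|^{-1/\varrho}}O(1)\sim(\tfrac1\varrho-1)\log\frac1{|y-y'|}$, so the H\"ormander condition is not obtained. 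For $\varrho=0$ the situation is worse still: integration by parts gains nothing ($2^{j\varrho}=1$), $2^{j(m+n)}=1$, and even the size bound $|K(x,w)|\lesssim|w|^{-n}$ fails. This obstruction is not an artifact: it is exactly why Chanillo--Torchinsky obtain only $M_2u$ (hence only $A_{p/2}$ weights) at the critical order, and why the paper cannot simply quote Coifman--Fefferman here. Your fallback --- rerun the proof of Theorem \ref{T1} ``replacing $M_2u$ by $Mu$ throughout'' --- is also unsubstantiated: every key step there (Lemmas \ref{L3}, \ref{La}, \ref{La0}) runs through Cauchy--Schwarz plus Parseval and structurally produces $M_2u$; the extra decay in $m$ does not convert those $L^2$ averages into $L^1$ averages.

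The paper's actual proof sidesteps kernel estimates for the full operator. It rescales each Littlewood--Paley piece by $2^{j\varrho}$ to get $\tilde T_j$ with symbol $a_j$ supported where $|\xi|\sim2^{j(1-\varrho)}$ and satisfying \eqref{E7}; this yields \emph{two} bounds: a weighted $L^2_\omega$ bound uniform in $j$ for every $\omega\in A_2$ (because $|\tilde K_j(x,x-y)|\lesssim(1+|x-y|^2)^{-2N}$ uniformly, hence $|\tilde T_ju|\lesssim Mu$), and an unweighted $L^2$ bound with geometric decay $2^{-jn(1-\varrho-\max\{\delta-\varrho,0\}/2)}$. Reverse H\"older ($\omega^{1+\epsilon}\in A_2$) plus Stein--Weiss interpolation with change of measures converts these into geometric decay in $L^2_\omega$, the dilation invariance of $A_p$ transfers this back to $T_j$, the series sums, and Rubio de Francia extrapolation upgrades the $A_2$ result to all $A_p$, $1<p<\infty$. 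If you want to keep a kernel-based scheme you would have to work at the level of the individual rescaled pieces (or prove a genuinely new $L^1$-type pointwise estimate); as written, the proposal does not prove the proposition.
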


\begin{proof}
To do this, take advantage of decomposition \eqref{de} and write $T_{j}$ as the following form
\begin{eqnarray*}
T_{j}u(x)
=\int_{\mathbb{R}^n}\int_{\mathbb{R}^n} e^{ i \langle 2^{j\varrho}x-y,\xi\rangle}a(x,2^{j\varrho}\xi) \psi(2^{-j(1-\varrho)}\xi) d\xi u(2^{-j\varrho}y)dy.
\end{eqnarray*}
Denote $a_{j}(x,\xi)=a(2^{-j\varrho}x,2^{j\varrho}\xi) \psi(2^{-j(1-\varrho)}\xi)$, $\tau_{j}u(x)=u(2^{j\varrho}x)$ and
\begin{eqnarray*}
\tilde{T}_{j}u(x)
=\int_{\mathbb{R}^n}\tilde{K}_{j}(x,x-y)u(y)dy
\end{eqnarray*}
with
$$\tilde{K}_{j}(x,w)=\int_{\mathbb{R}^n} e^{ i  \langle w,\xi\rangle}a_{j}(x,\xi) d\xi.$$
Then
$$T_{j}u(x)=\tau_{j}\tilde{T}_{j}\tau_{-j}u(x).$$

Notice that $supp_{\xi} a_{j}(x,\xi)\subset \{\xi:C^{-1}2^{j(1-\varrho)}\leq|\xi|\leq C2^{j(1-\varrho)}\}$
\begin{equation}\label{E7}
|\partial^{\alpha}_{x}\partial^{\beta}_{\xi}a_{j}(x,\xi)|\leq C_{\alpha,\beta} 2^{-jn(1-\varrho)+|\alpha|(\delta-\varrho)}
\end{equation}
for any multi-indices $\alpha,\beta,$ which implies two facts. One is that
\begin{equation}\label{E9}
\|\tilde{T}_{j}u\|_{L^{2}}\lesssim2^{-jn(1-\varrho-\frac{\max\{\delta-\varrho,0\}}{2})}\|u\|_{L^{2}}.
\end{equation}
The other is that for any $\omega\in A_{2}$
\begin{equation}\label{E8}
\|\tilde{T}_{j}u\|_{L^{2}_{\omega}}\lesssim\|u\|_{L^{2}_{\omega}}.
\end{equation}
In fact, \eqref{E9} is a direct conclusion of $L^{2}$ boundedness of pseudo-differential since that $a_{j}(x,\xi)\in S^{-\frac{n\max\{\delta-\varrho,0\}}{2(1-\varrho)}}_{0,\frac{\delta-\varrho}{1-\varrho}}$ with bounds $\lesssim 2^{-jn(1-\varrho-\frac{\max\{\delta-\varrho,0\}}{2})}$ follows from  \eqref{E7}.  For  \eqref{E8}, integrating by parts and \eqref{E7} give that
$$|\tilde{K}_{j}(x,x-y)|\leq C(1+|x-y|^{2})^{-2N}$$
for some positive integer $N$ large enough. So, \eqref{E8} follows immediately from the following pointwise estimate
\begin{eqnarray*}
|\tilde{T}_{j}u(x)|
&\lesssim& \int_{\mathbb{R}^n} (1+|x-y|^{2})^{-2N}|u(y)|dy
\lesssim Mu(x).
\end{eqnarray*}

If $\omega\in A_{2}$, then that $\omega^{1+\epsilon}\in A_{2}$ for some $\epsilon>0$ follows from the reverse H\"{o}lder inequality. Thus, \eqref{E8} gives that
\begin{equation}\label{E81}
\|\tilde{T}_{j}u\|_{L^{2}_{\omega^{1+\epsilon}}}\leq C\|u\|_{L^{2}_{\omega^{1+\epsilon}}}.
\end{equation}
Note that $\omega=(\omega^{1+\epsilon})^{\frac{1}{1+\epsilon}}\times1^{1-\frac{1}{1+\epsilon}}$. For \eqref{E81} and \eqref{E9}, the interpolation theorem with change of measures ( Stein-Weiss \cite{Weiss}) implies that
\begin{equation*}
\|\tilde{T}_{j}u\|_{L^{2}_{\omega}}\leq C2^{-jn(1-\varrho-\frac{\max\{\delta-\varrho,0\}}{2})(\frac{\epsilon}{1+\epsilon})}\|u\|_{L^{2}_{\omega}}.
\end{equation*}
Moreover, the fact that if $\omega\in A_{p}$, then both $\tau_{j}\omega$ and$\tau_{-j}\omega$ belong to $\in A_{p}$ and the $A_{p}$ constan $[\omega]_{p}$ of them is the same as that of $\omega$, gives
\begin{equation*}
\|T_{j}u\|_{L^{2}_{\omega}}=\|\tau_{j}\tilde{T}_{j}\tau_{-j}u\|_{L^{2}_{\omega}}\leq C2^{-jn(1-\varrho-\frac{\max\{\delta-\varrho,0\}}{2})(\frac{\epsilon}{1+\epsilon})}\|u\|_{L^{2}_{\omega}}.
\end{equation*}
Therefore, the following estimate is valid from that $1-\varrho-\frac{\max\{\delta-\varrho,0\}}{2}>0$ and $\epsilon>0$.
\begin{equation*}
\|T_{a}u\|_{L^{2}_{\omega}}\leq C\|u\|_{L^{2}_{\omega}},\quad \forall \omega\in A_{2}.
\end{equation*}
By extrapolation theorem of Rubio de Francia \cite{Francia}, we can get the desired estimate immediately.

\end{proof}

In order to finish the proof of Theorem \ref{weight}, the following $L^{p}$ estimate of pseudo-differential operators is necessary.
\begin{prop}\label{Tz}
Let $0\leq\varrho\leq1,$ $0\leq\delta<1$ and $a(x,\xi)\in S^{m}_{\varrho,\delta}$ with
$$m\leq -n(1-\varrho)|\frac{1}{2}-\frac{1}{p}|-n\frac{\max\{\delta-\varrho,0\}}{\max\{p,2\}}.$$
Then there is a constant $C$ independent of $a$ and $u$, such that
$$\|T_{a}u\|_{L^{p}}\leq C\|u\|_{L^{p}}.$$
\end{prop}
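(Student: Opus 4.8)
The plan is to establish the unweighted $L^p$ bound by interpolating between the endpoint cases $p=2$ and $p=\infty$ (equivalently $p=1$, by duality of the symbol class), exactly as in the structure already used for Theorem \ref{weight}. First I would record the scaling reduction already present in the proof of Proposition \ref{P2}: after the Littlewood--Paley decomposition $T_a=\sum_{j}T_j$ and the dilation $T_j=\tau_j\tilde T_j\tau_{-j}$, the rescaled symbol $a_j(x,\xi)=a(2^{-j\varrho}x,2^{j\varrho}\xi)\psi(2^{-j(1-\varrho)}\xi)$ satisfies, for a symbol of order $m$,
\begin{equation*}
|\partial^\alpha_x\partial^\beta_\xi a_j(x,\xi)|\leq C_{\alpha,\beta}\,2^{jm+|\alpha|(\delta-\varrho)},\qquad \operatorname{supp}_\xi a_j\subset\{C^{-1}2^{j(1-\varrho)}\leq|\xi|\leq C2^{j(1-\varrho)}\}.
\end{equation*}
Thus $a_j\in S^{0}_{0,\,(\delta-\varrho)_+/(1-\varrho)}$ with bound $\lesssim 2^{jm}$ when $\delta>\varrho$, and (applying Theorem \ref{L2} to the rescaled operator and noting $\tau_{\pm j}$ are $L^p$-isometries up to a harmless dilation constant) one gets $\|T_j\|_{L^2\to L^2}\lesssim 2^{jm}2^{jn\max\{\delta-\varrho,0\}/2}$, while from the kernel decay $|\tilde K_j(x,x-y)|\lesssim 2^{jm}(1+|x-y|)^{-2N}$ one obtains $\|T_j\|_{L^1\to L^1}\lesssim 2^{jm}2^{jn(1-\varrho)}$ (and the same for $L^\infty\to L^\infty$). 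These are the two endpoints.

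Next I would interpolate. For $2\leq p<\infty$, the Riesz--Thorin theorem applied between the $L^2$ and $L^\infty$ bounds gives
\begin{equation*}
\|T_j\|_{L^p\to L^p}\lesssim 2^{jm}\,2^{jn(1-\varrho)(\frac12-\frac1p)}\,2^{jn\frac{\max\{\delta-\varrho,0\}}{2}\cdot\frac{2}{p}},
\end{equation*}
and for $1<p\leq 2$ interpolating between $L^1$ and $L^2$ gives the symmetric estimate with $|\tfrac12-\tfrac1p|$ in place of $\tfrac12-\tfrac1p$ and $\max\{p,2\}=2$ in the last factor. In either case the exponent of $2^j$ is
\begin{equation*}
j\Big(m+n(1-\varrho)\big|\tfrac12-\tfrac1p\big|+n\tfrac{\max\{\delta-\varrho,0\}}{\max\{p,2\}}\Big),
\end{equation*}
which is $\leq 0$ precisely under the hypothesis on $m$. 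When the exponent is strictly negative one sums the geometric series over $j$ and is done; when $m$ equals the critical value exactly the exponent may vanish, so I would instead prove the estimate for $m$ strictly below the critical value for every $p$ in a small interval around the target exponent and recover the endpoint case itself by a further interpolation (or, alternatively, handle the borderline by an almost-orthogonality/Cotlar argument in $L^2$ and a dyadic summation trick absorbing the logarithmic loss, but the interpolation route is cleaner). One should also deal with the low-frequency term $T_{-1}$ (the piece of the symbol supported in $|\xi|\lesssim 1$): its kernel is a Schwartz-class convolution-type kernel in $x-y$ with $x$-dependence controlled by the $S^m_{\varrho,\delta}$ bounds, so it maps $L^p\to L^p$ for all $1\leq p\leq\infty$ trivially.

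The main obstacle I expect is the borderline case, where $m$ is exactly the critical exponent and the per-block operator norms $\|T_j\|_{L^p\to L^p}$ do not decay. Naive summation then fails, and one must exploit the near-orthogonality of the $T_j$ coming from the disjoint frequency supports $E_j$ together with the smoothness in $x$ (which forces $\delta<1$) to sum the pieces; the cleanest device is to prove the strict-inequality version of the proposition for a range of exponents and then interpolate back to the endpoint, which avoids any delicate orthogonality bookkeeping. A secondary technical point is keeping the dilation constants in $\tau_{\pm j}$ from accumulating: since $\|\tau_j f\|_{L^p}=2^{jn\varrho/p}\|f\|_{L^p}$ and the $\tau_{-j}$ contributes the reciprocal factor, they cancel exactly in the composition $\tau_j\tilde T_j\tau_{-j}$, so no extra growth enters — this should be stated carefully but is routine.
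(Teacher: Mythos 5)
Your route has a genuine gap, and it appears already before the borderline case you flag: the endpoints you interpolate from are too weak, and the Riesz--Thorin arithmetic does not give the exponent you wrote. From the kernel bound $|\tilde K_j(x,w)|\lesssim 2^{jm}(1+|w|)^{-2N}$ one gets $\|T_j\|_{L^1\to L^1}+\|T_j\|_{L^\infty\to L^\infty}\lesssim 2^{j(m+n(1-\varrho))}$, and interpolating this with $\|T_j\|_{L^2\to L^2}\lesssim 2^{j(m+n\max\{\delta-\varrho,0\}/2)}$ at $\theta=2/p$ yields $\|T_j\|_{L^p\to L^p}\lesssim 2^{jm}\,2^{jn(1-\varrho)(1-\frac{2}{p})}\,2^{jn\max\{\delta-\varrho,0\}/p}$ for $p>2$: the middle factor carries $n(1-\varrho)(1-\frac2p)=2n(1-\varrho)(\frac12-\frac1p)$, twice what you claimed. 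Consequently the geometric series converges only under $m\leq -2n(1-\varrho)|\frac12-\frac1p|-n\max\{\delta-\varrho,0\}/\max\{p,2\}$, a strictly weaker statement than Proposition \ref{Tz} whenever $\varrho<1$ and $p\neq2$ (the same factor-of-two loss occurs on the $L^1$ side for $1<p<2$). This is the classical phenomenon that the trivial $L^1$/$L^\infty$ endpoints obtained from kernel integrability sit at order $-n(1-\varrho)$, whereas the sharp endpoints at order $-\frac n2(1-\varrho)-\frac n2\max\{\delta-\varrho,0\}$ hold only in the $H^1\to L^1$ and $L^\infty\to\mathrm{BMO}$ sense; without upgrading the endpoints, no amount of blockwise summation recovers the stated threshold.

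The borderline case is a second, independent obstruction, and neither of your proposed repairs works: the critical order $m_c(p)$ is strictly monotone in $p$ on $(2,\infty)$, so for $m=m_c(p_*)$ all strictly subcritical exponents lie on one side of $p_*$ and Riesz--Thorin between them cannot reach $p_*$; and Cotlar--Stein is an $L^2$ device that does not sum the blocks in $L^p$ for $p\neq2$. The paper avoids both problems by not decomposing at all: for $2<p<\infty$ it applies the Fefferman--Stein interpolation between Hounie's critical $L^2$ theorem (Theorem \ref{L2}) and the critical $(L^\infty,\mathrm{BMO})$ estimate of Corollary \ref{BMO} (itself a consequence of Theorem \ref{T1}), the order of the symbol varying along the interpolation so that the two distinct endpoint orders combine to exactly $m_c(p)$; for $1<p\leq2$ it quotes the \'Alvarez--Hounie $L^p$ results. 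If you want a self-contained argument, the missing ingredient is precisely these $\mathrm{BMO}$ (and dual Hardy-space) endpoint bounds for the full operator at critical order, to be interpolated in place of your $L^1$ and $L^\infty$ block estimates.
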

\begin{proof}
For $1<p\leq 2$, it has been proved by \'{A}lvarez \cite{Hounie} and Hounie\cite{Hounie1}. For $2<p<\infty$, it can be proved by interpolation theorem due to Fefferman and Stein \cite{Fefferman1} between Lemma \ref{L2} and the following corollary of Theorem \ref{T1}.
\end{proof}

\begin{corr}\label{BMO}
Let $0\leq\varrho\leq1,$ $0\leq\delta<1$ and $m=-\frac{n}{2}(1-\varrho).$ Suppose $a(x,\xi)\in S^{m}_{\varrho,\delta}$. Then there is a constant $C$ independent of $a$ and $u$, such that
$$\|T_{a}u\|_{{\rm BMO}}\leq C\|u\|_{L^{\infty}}.$$
\end{corr}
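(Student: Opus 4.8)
The plan is to derive Corollary~\ref{BMO} directly from Theorem~\ref{T1} together with the basic fact that the sharp maximal function controls the $\mathrm{BMO}$ norm: for any locally integrable $f$ one has $\|f\|_{\mathrm{BMO}}\approx\|f^{\sharp}\|_{L^{\infty}}$. Applying Theorem~\ref{T1} with $u$ a Schwartz function and $a\in S^{-\frac{n}{2}(1-\varrho)}_{\varrho,\delta}$ gives the pointwise bound $(T_{a}u)^{\sharp}(x)\le C\,M_{2}u(x)$ for all $x\in\mathbb{R}^{n}$. Since $M_{2}u(x)=M(|u|^{2})^{1/2}(x)\le \|u\|_{L^{\infty}}$ by the trivial estimate $M(|u|^{2})(x)\le\|u\|_{L^{\infty}}^{2}$, it follows at once that $(T_{a}u)^{\sharp}(x)\le C\|u\|_{L^{\infty}}$ uniformly in $x$, hence $\|(T_{a}u)^{\sharp}\|_{L^{\infty}}\le C\|u\|_{L^{\infty}}$, and therefore $\|T_{a}u\|_{\mathrm{BMO}}\le C\|u\|_{L^{\infty}}$.

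First I would state the reduction: it suffices to bound $\|(T_{a}u)^{\sharp}\|_{L^{\infty}}$, by the classical equivalence of the $\mathrm{BMO}$ seminorm with the sup of the sharp function. Then I would invoke Theorem~\ref{T1} to replace $(T_{a}u)^{\sharp}$ by $C\,M_{2}u$. The only remaining point is the elementary observation that $M_{2}$ maps $L^{\infty}$ into $L^{\infty}$ with norm one, which is immediate from the definition of the Hardy--Littlewood maximal function applied to $|u|^{2}$. Finally I would remark on the density issue: Theorem~\ref{T1} is stated for Schwartz $u$, so for general $u\in L^{\infty}$ one either interprets $T_{a}u$ via the standard extension of $T_{a}$ to $L^{\infty}$ (modulo constants, as is customary for $\mathrm{BMO}$ estimates) or restricts attention to a suitable dense subclass; this is a routine matter and does not affect the constant.

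There is essentially no hard step here; the content is entirely contained in Theorem~\ref{T1}, and the corollary is the standard ``sharp function $\Rightarrow$ $\mathrm{BMO}$'' packaging. If anything warrants a word of care, it is making precise in what sense $T_{a}u$ is defined for $u\in L^{\infty}$ so that the $\mathrm{BMO}$ statement is meaningful, but this can be dispatched in a single sentence by noting that the Littlewood--Paley decomposition~\eqref{de} and the kernel estimates of Section~2 give a well-defined action of each $T_{j}$ on $L^{\infty}$, with the series converging in $\mathrm{BMO}$ thanks to the very bound being proved.
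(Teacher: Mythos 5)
Your argument is correct and is exactly the route the paper takes: it deduces the corollary from Theorem \ref{T1} via the equivalence $\|f\|_{\rm BMO}\approx\|f^{\sharp}\|_{L^{\infty}}$ together with the trivial bound $M_{2}u\leq\|u\|_{L^{\infty}}$. Your extra remark on how $T_{a}u$ is to be interpreted for general $u\in L^{\infty}$ is a reasonable point of care that the paper does not address, but it does not change the substance.
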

This corollary follows from the fact that $u^{\sharp}\in L^{\infty}$ if and only if $u\in {\rm BMO},$ where ${\rm BMO}$ denotes the spaces of functions of bounded mean oscillation.
\begin{rem}
For $0\leq\varrho\leq1$ and $0\leq\delta<1$ the $(L^{\infty},{\rm BMO})$ estimate in Corollary \ref{BMO} is sharp. In fact, if we consider the symbol $a(\xi)=\phi(\xi)|\xi|^{m}e^{i|\xi|^{1-\varrho}}\in S^{m}_{\varrho,\delta}$, where $\phi$ is a smooth function which vanishes in a neighborhood of origin and equals 1 outside a compact set, then the operator associated to this symbol dose not map $L^{\infty}$ to ${\rm BMO}$ if $m>-\frac{n}{2}(1-\varrho)$(see Miyachi \cite{Miyachi})
\end{rem}

\begin{proof}[Proof of Theorem \ref{weight}]
For $r<p<\infty$, denote $p_{0}=\frac{2p}{r}$ and $p_{1}=\frac{p}{r}$, then $p_{1}>1$, $p_{0}>2$ and $p_{1}\leq p\leq p_{0}$.
By Proposition \ref{P1}, $\forall\omega\in A_{p/r}=A_{p_{0}/2}$, we have
$$\|T_{j}u\|_{L^{p_{0}}_{\omega}}\leq C2^{-jn(1-\varrho)(\frac{1}{r}-\frac{1}{2})}\|u\|_{L^{p_{0}}_{\omega}}.$$
By Proposition \ref{P2}, $\forall\omega\in A_{p/r}=A_{p_{1}}$, we have
$$\|T_{j}u\|_{L^{p_{1}}_{\omega}}\leq C2^{-jn(1-\varrho)(\frac{1}{r}-1)}\|u\|_{L^{p_{1}}_{\omega}}.$$
The Marcinkiewicz interpolation theorem gives
$$\|T_{j}u\|_{L^{p}_{\omega}}\leq C\|u\|_{L^{p}_{\omega}}.$$
Denote $\nu=n(1-\varrho)|\frac{1}{2}-\frac{1}{p}|+n\frac{\max\{\delta-\varrho,0\}}{\max\{p,2\}}$. Then Theorem \ref{Tz} implies that
$$\|T_{j}u\|_{L^{p}}\leq C2^{-j(\frac{n(1-\varrho)}{r}-\nu)}\|u\|_{L^{p}}.$$
By the reverse H\"{o}lder inequality and the interpolation theorem with change of measures again, we get
$$\|T_{j}u\|_{L^{p}_{\omega}}\leq C2^{-j(\frac{n(1-\varrho)}{r}-\nu)(\frac{\epsilon}{1+\epsilon})}\|u\|_{L^{p}_{\omega}}.$$
Note that $\frac{n(1-\varrho)}{r}-\nu>0$ and $\epsilon>0$. The desired estimate can be got immediately.
\end{proof}

$\mathbf{Acknowledgements}$ The author would like to express gratitude to Prof. Wenyi Chen and Prof. Xiangrong Zhu for valuable suggestions concerning this paper.


\bibliographystyle{Plain}

\end{document}